\documentclass[12pt, a4paper]{amsart}
\usepackage[margin=1.2in]{geometry}
\usepackage{tikz}
\usetikzlibrary{arrows}
\usetikzlibrary{matrix,positioning}
\usepackage{amsmath}
\usepackage{amsthm}
\usepackage{color}
\usepackage{graphicx}
\usepackage{float}
\usepackage{young}
\usepackage[vcentermath]{youngtab}
\usepackage{amsfonts}
\usepackage{hhline}
\usepackage{amssymb}
\usepackage{amscd}
\input xy
\xyoption{all}
\usepackage{booktabs}
\usepackage{array}
\usepackage{multirow}
\numberwithin{equation}{section}
\newtheorem{theorem}{Theorem}[section]

\newtheorem{lemma}[theorem]{Lemma}
\newtheorem{prop}[theorem]{Proposition}

\newtheorem{defi}{Definition}[section]

\theoremstyle{definition}
\newtheorem{example}{Example}[section]
\newtheorem{rem}{Remark}[section]

\newcommand\half{\frac{1}{2}}

\newcommand\be{\beta}

\newcommand\g{\gamma}

\newcommand\D{\Delta}

\newcommand\Dp{\Delta^+}

\renewcommand\a{\alpha}

\newcommand\ganz{\mathbb Z}

\newcommand\s{\sigma}

\newcommand\e{\epsilon}

\newcommand\R{\mathbb R}

\newcommand\G{\Gamma}

\renewcommand{\G}{\Gamma}

\begin{document}
\title[Minimal inversion complete sets]{The maximum cardinality of minimal inversion complete sets  in finite reflection groups}
\author[Malvenuto, M\"oseneder Frajria, Orsina, Papi]{Claudia Malvenuto\\ Pierluigi M\"oseneder Frajria\\Luigi Orsina\\Paolo Papi}

\begin{abstract} We compute for reflection groups of type $A,B,D,F_4,H_3$ and for dihedral groups a statistic counting the maximal cardinality of a set of elements in the group whose generalized inversions yield the full set of inversions and which are minimal with respect to this property. We  also provide  lower bounds
for the $E$  types  that we conjecture to be the exact value of our statistic. 
\end{abstract}
\keywords{Inversion; reflection group; root system; extremal combinatorics of permutations.}
\subjclass[2010]{Primary 	17B22; Secondary 05E10}
\maketitle

\section{Introduction}
Let $S_n$ be the symmetric group on $\{1,\ldots,n\}$. Recall that if $\s\in S_n$,  $1\leq i<j\leq n$ and $\s(i)>\s(j)$,  the pair $(\s(i),\s(j))$  is said to be an inversion for  $\s\in S_n$.  Let $N(\s)$ denote the inversion set of $\s$.  Also set $D=\{(i,j)\mid \,1\leq j<i\leq n\}$. 
\begin{defi}\label{d} We say that $Y\subset S_n$ is {\sl inversion complete} if $\bigcup\limits_{\s\in Y}N(\s)=D$, and that is {\sl minimal inversion complete} if it is inversion complete and minimal  with respect to this property.
\end{defi} 
The following result in extremal combinatorics of permutations  has been communicated to us by Fabio Tardella \cite{T},  who informed us about a forthcoming joint work with M. Queyranne and  E. Balandraud.
\begin{theorem} \label{f} The maximal cardinality of a minimal inversion complete subset of $S_n$ is $\lfloor \frac{n^2}{4}\rfloor$.
\end{theorem}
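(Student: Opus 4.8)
The plan is to rephrase both defining conditions in terms of the \emph{reading orders} of the permutations and then to reduce the upper bound to Mantel's theorem on triangle-free graphs. I first record the elementary observation that, for $a>b$, the pair $(a,b)$ lies in $N(\sigma)$ exactly when $a$ precedes $b$ in the one-line notation of $\sigma$. Fix a minimal inversion complete set $Y$. Minimality means that every $\sigma\in Y$ admits a \emph{private} inversion $p_\sigma\in N(\sigma)$ belonging to no other $N(\tau)$, $\tau\in Y\setminus\{\sigma\}$; since two permutations sharing a private inversion would coincide, the map $\sigma\mapsto p_\sigma$ is injective and $|Y|$ equals the number of distinct pairs so obtained.

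For the upper bound I would build the graph $H$ on the vertex set $\{1,\dots,n\}$ whose edges are the unordered private pairs $\{a,b\}$ (with $p_\sigma=(a,b)$, $a>b$), so that $|E(H)|=|Y|$. The key claim is that $H$ is triangle-free. Suppose instead that three private pairs form a triangle $\{u,w\},\{w,v\},\{u,v\}$ with $u<w<v$, private to distinct elements $\alpha,\beta,\gamma\in Y$ respectively. Because $(w,u)$ and $(v,w)$ are private to $\alpha$ and $\beta$, the permutation $\gamma$ does not invert them, so $u$ precedes $w$ and $w$ precedes $v$ in $\gamma$; but $\gamma$ does invert its own private pair $(v,u)$, so $v$ precedes $u$ in $\gamma$. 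These three relations describe a cyclic reading order, which is impossible. Hence $H$ is triangle-free, and Mantel's theorem gives $|Y|=|E(H)|\le\lfloor n^2/4\rfloor$.

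For the matching lower bound I would give an explicit construction realizing the extremal complete bipartite graph. Split $\{1,\dots,n\}$ into $A=\{1,\dots,\lceil n/2\rceil\}$ and $B=\{\lceil n/2\rceil+1,\dots,n\}$, and for each \emph{cross pair} $(b,a)$ with $b\in B$, $a\in A$ (automatically $b>a$) define $\sigma_{b,a}$ by the reading order: $A\setminus\{a\}$ in increasing order, then $b$, then $a$, then $B\setminus\{b\}$ in increasing order. A direct check shows that the only cross pair inverted by $\sigma_{b,a}$ is $(b,a)$ itself, so $(b,a)$ is a private inversion and the $\sigma_{b,a}$ are pairwise distinct. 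At the same time, every within-$A$ pair is inverted by some $\sigma_{b,a}$ (take $a$ to be its smaller entry) and every within-$B$ pair by some $\sigma_{b,a}$ (take $b$ to be its larger entry), so the family is inversion complete. This yields a minimal inversion complete set of cardinality $|A|\,|B|=\lfloor n^2/4\rfloor$.

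The step I expect to be the crux is the triangle-free claim: the entire difficulty of the extremal problem is distilled into the single structural fact that minimality, combined with the total-order nature of permutations, forbids a private triangle, after which Mantel's theorem does all the counting. The completeness check in the construction and the bookkeeping of which pairs each $\sigma_{b,a}$ inverts are routine by comparison.
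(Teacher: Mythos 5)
Your proposal is correct and follows essentially the same route as the paper: both reduce the upper bound to Mantel's theorem by showing that the graph of private (the paper's ``essential'') inversions is triangle-free, and both realize the lower bound by the same bipartite-split family of permutations whose unique cross inversion is private. Your direct reading-order verifications simply replace the paper's root-path and biclosedness machinery, but the underlying argument is identical.
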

 The enumerative problem dealt with in Theorem \ref{f} admits a natural generalization to finite reflection groups. Indeed, let $\D$ be a finite 
crystallographic irreducible root system and $W$ be the corresponding  Weyl group. Fix a set of positive roots $\Dp\subset\D$. If $w\in W$, then permutation inversions  are naturally replaced by the subset  $N(w)$ of $\Dp$ defined in \eqref{nw} (cf. \cite{BB}). The problem consists in determining the maximal cardinality of a subset $Y$ of $W$ such that $\bigcup\limits_{w\in Y}N(w)=\Dp$ and minimal with respect to this property. 
Let us denote this number by $MC(T)$ for a group of type $T$.\par In this perspective we provide a proof of Theorem \ref{f} and of the following results. 

\begin{theorem} \label{fb}   $MC(B_n)=\binom{n}{2}+1$.\end{theorem}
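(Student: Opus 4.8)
The plan is to prove the two inequalities $MC(B_n)\le \binom n2+1$ and $MC(B_n)\ge\binom n2+1$ separately, after first recording the combinatorial meaning of minimality. A subset $Y\subset W$ is minimal inversion complete if and only if it is inversion complete and every $w\in Y$ admits a \emph{private} root, i.e. a root $p_w\in\Dp$ with $p_w\in N(w)$ but $p_w\notin N(w')$ for all $w'\in Y\setminus\{w\}$; indeed, since $Y$ covers $\Dp$, removing $w$ still covers $\Dp$ exactly when every root of $N(w)$ lies in some other $N(w')$, so $w$ is irremovable precisely when it has a private root. In particular $w\mapsto p_w$ is injective, so $|Y|$ is bounded by the number of roots that can be realized \emph{simultaneously} as private roots, and the whole question becomes an irredundant covering problem for $\Dp$ by inversion sets, which are exactly the biconvex subsets of $\Dp$ (both $N(w)$ and its complement closed under root sums that remain positive). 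Note already here that the longest element $w_0$, with $N(w_0)=\Dp$, cannot belong to any large minimal set, so the extremal families must be built from short elements.

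For the lower bound I would produce an explicit family of $\binom n2+1$ signed permutations, using coordinates $\Dp=\{e_i-e_j,\ e_i+e_j\ (i<j)\}\cup\{e_i\}$ with $|\Dp|=n^2$. The idea is to single out $\binom n2+1$ roots to serve as private \emph{tops} and, for each, a biconvex set whose unique maximal element in the root order is that top, arranged so that (i) the union of the sets is all of $\Dp$ and (ii) no chosen top lies in any other chosen set. Concretely one can exploit that a length-two element $s_is_j$ has $N(s_is_j)=\{\alpha_j,\,s_j\alpha_i\}$, which contributes a height-two root privately while sharing the simple root $\alpha_j$, and then continue along chains in the root poset that terminate at the selected tops; the remaining $n^2-\binom n2-1$ roots are absorbed as interior, shared elements of these chains. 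The $B_2$ case, where $\binom22+1=2$ is realized by the two maximal proper biconvex sets $\{e_1-e_2,e_1,e_1+e_2\}$ and $\{e_2,e_1+e_2,e_1\}$ with private roots the two simple roots $e_1-e_2$ and $e_2$, fixes the base of the construction. Verifying covering and privateness is then bookkeeping from biconvexity, but condition (ii) must be checked for every top.

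The main obstacle is the upper bound $|Y|\le\binom n2+1$. The injection $w\mapsto p_w$ only yields the weak bound $|Y|\le|\Dp|=n^2$, and one must show that biconvexity forces private roots to \emph{carry baggage}: covering a non-simple root privately requires an inversion set of size at least two, dragging additional (and necessarily shared) roots into the cover, which heuristically halves the count toward $\binom n2$. I would try to make this precise either by a weighting/double-counting argument over $\Dp$, charging each $w\in Y$ both its private root $p_w$ and a distinguished second root of $N(w)$ and then bounding how often a root can be doubly charged, or by restricting inversion sets to the type $A_{n-1}$ subsystems $\{e_i-e_j\}$ and $\{e_i+e_j\}$ and trying to feed the two halves into the symmetric group bound of Theorem \ref{f}, handling the short roots $e_i$ and the single exceptional element responsible for the ``$+1$'' by hand.

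The delicate point, and the step I expect to resist a clean reduction, is the interaction between the two $A$-type halves $\{e_i-e_j\}$ and $\{e_i+e_j\}$: restriction of a $B_n$-irredundant cover to a subsystem need not stay irredundant, so the symmetric group bound cannot be invoked mechanically, and the constants ($2\lfloor n^2/4\rfloor$ versus $\binom n2+1$) do not match exactly. I therefore anticipate that the tight upper bound will require a genuinely global argument on $\Dp$ together with a careful analysis pinning down \emph{why exactly one} surplus element can be afforded and no more; this is where the bulk of the work will lie, and where the precise value $\binom n2+1$, rather than a slightly larger estimate, will ultimately be forced.
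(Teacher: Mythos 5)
Your proposal is a plan rather than a proof, and the decisive step --- the upper bound $MC(B_n)\le\binom n2+1$ --- is missing. You correctly isolate the notion of a private (in the paper's terminology, \emph{essential}) root and correctly predict that the lower bound comes from choosing $\binom n2+1$ ``tops'' with biclosed sets around them; the paper's choice is the abelian set $P(B_n)=\{\e_i+\e_j\mid i<j\}\cup\{\e_1\}$ with explicit biclosed sets $L(\gamma)$, and your sketch, while never naming the tops or the sets, is compatible with this. But for the upper bound you only list two candidate strategies (a double-counting/charging argument, or restricting to the two $A_{n-1}$ subsystems $\{e_i-e_j\}$ and $\{e_i+e_j\}$ and invoking Theorem \ref{f}), and you yourself concede that the constants do not match and that the restriction need not stay irredundant. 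Neither route is carried out, and neither is what makes the paper's argument work.

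What the paper actually does is quite different and is worth contrasting with your heuristics. It symmetrizes the essential set to $S=S^+\cup(-S^+)$ and encodes $S$ as a graph on the $2n+1$ vertices $\{0,\pm1,\dots,\pm n\}$. Closedness/coclosedness of inversion sets forces both this graph and an auxiliary ``type $C$'' graph to be triangle-free, and a further \emph{path condition} (two root paths in $S^+$ with the same last sum must share a root, Proposition \ref{conditions}(1)) gives constraints beyond triangle-freeness. Note that triangle-freeness alone is genuinely insufficient: Mantel's theorem on $2n+1$ vertices only bounds $|S^+|$ by roughly $\binom{n+1}{2}$, which exceeds $\binom n2+1$ --- so your intuition that ``biconvexity halves the count toward $\binom n2$'' cannot be realized by a triangle count alone. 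The paper closes the gap by hand-checking the cases $n=3,4$ (using the path condition to kill the extremal triangle-free configurations) and then running an induction on $n$ via a vertex-degree count: deleting a vertex pair $\pm i$ and applying the inductive bound forces every vertex to have degree exactly $n$, which produces at least $n^2$ edges and contradicts the inductive bound after deleting a second pair. This global induction, together with the explicit low-rank analysis, is precisely the ``genuinely global argument'' you anticipated would be needed but did not supply.
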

\begin{theorem} \label{fd}  $MC(D_n)=\binom{n}{2}$.
\end{theorem}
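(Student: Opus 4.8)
The plan is to treat the two inequalities $MC(D_n)\le\binom n2$ and $MC(D_n)\ge\binom n2$ separately, working throughout in the standard model where $\Delta^+=\{e_i-e_j,\ e_i+e_j:1\le i<j\le n\}$ (so $|\Delta^+|=n(n-1)=2\binom n2$) and $W$ is realized as the group of signed permutations of $\{1,\dots,n\}$ effecting an even number of sign changes. Since $D_3\cong A_3$ is already covered by Theorem \ref{f}, I assume $n\ge4$. The first step is to record two elementary reformulations of the extremal condition. First, a complete set $Y$ is minimal if and only if every $w\in Y$ admits a \emph{private root}: a positive root lying in $N(w)$ but in no other $N(w')$, $w'\in Y$. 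Since distinct elements have disjoint private sets, this already gives $|Y|\le|\Delta^+|$, and the whole difficulty is to gain the factor $2$. Second, a minimal $Y$ has the property that $\{N(w):w\in Y\}$ is an antichain for inclusion, because an element whose inversion set were contained in another's would be redundant and could own no private root.

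For the lower bound I would argue by induction on $n$, or equivalently exhibit an explicit family. Embedding $D_{n-1}$ on the coordinates $1,\dots,n-1$, the roots newly created in passing to $D_n$ are exactly the $2(n-1)$ roots $e_i\pm e_n$ ($i<n$), and $\binom n2-\binom{n-1}2=n-1$. The plan is to enlarge a minimal complete family for $D_{n-1}$ of size $\binom{n-1}2$ by $n-1$ carefully chosen elements of $W(D_n)$ that together invert all of $e_i\pm e_n$, each new element carrying its own private root among them, and---this is the point requiring care---inverting none of the private roots used at the previous stage. Verifying completeness and the survival of all private roots is routine but must be done honestly; I expect this to be the easier of the two bounds.

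The upper bound $MC(D_n)\le\binom n2$ is the crux. The target $\binom n2=\tfrac12|\Delta^+|$ suggests one should charge each $w\in Y$ to (at least) one root so that the total charge cannot exceed $\binom n2$; the natural attempt is an injection of $Y$ into the $\binom n2$ ``minus roots'' $\{e_i-e_j\}$, which form a subsystem of type $A_{n-1}$, treating the ``plus roots'' $\{e_i+e_j\}$ as the shared part of the cover. Here the obstacles are genuine: a nonempty inversion set may consist entirely of plus roots (for instance $N(s_n)=\{e_{n-1}+e_n\}$, since $e_{n-1}+e_n$ is a simple root of $D_n$), so no naive injection into the minus family can work, and the analysis must control both root families simultaneously. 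I expect the correct argument to parallel the extremal (Mantel/Tur\'an--type) mechanism behind the value $\lfloor n^2/4\rfloor$ of Theorem \ref{f}, now carried out in the signed-permutation model, with the even-sign-change constraint accounting for the difference by one from the value $MC(B_n)=\binom n2+1$ of Theorem \ref{fb}.

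A complementary route for the upper bound, worth attempting first, is to transfer the already established bound for $B_n$. For $w\in W(D_n)\subset W(B_n)$ the $D_n$-inversion set is obtained from the $B_n$-inversion set by discarding the short roots $e_i$, so a minimal $D_n$-complete $Y$ is a candidate irredundant cover of the long roots inside $B_n$. The hope would be to extend $Y$ by a controlled number of elements covering the short roots and then invoke $MC(B_n)=\binom n2+1$; the delicate point, and the main obstacle to this approach, is that covering the short roots forces elements of $W(B_n)\setminus W(D_n)$ and may destroy either minimality or the privacy of the roots of $Y$, so the bookkeeping of exactly which ``$+1$'' is lost in passing from $B_n$ to $D_n$ is where the real work lies.
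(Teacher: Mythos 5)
Your proposal is a plan rather than a proof: both inequalities are left unexecuted, and the part you yourself identify as the crux --- the upper bound --- is exactly where the missing idea lies. The paper's mechanism is the following. Given a minimal inversion complete $Y$, one selects for each $w\in Y$ a private (``essential'') root $\a(w)$, forms $S^+=\{\a(w)\}$ and the symmetrization $S=S^+\cup(-S^+)$, and encodes $S$ as an undirected graph on the $2n$ vertices $\{\pm1,\dots,\pm n\}$ (your signed-permutation model), invariant under $i\mapsto-i$ and with no antipodal edges. Triangle-freeness of this graph (Proposition \ref{conditions}(2)--(3)) is the Mantel-type ingredient you anticipate, but by itself it only yields $|S|\le n^2$ edges, i.e.\ $|Y|\le n^2/2=\binom n2+n/2$, which overshoots the target. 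The extra ingredient you are missing is Proposition \ref{conditions}(1): if two root paths inside $S^+$ have the same last sum, their supports must meet. This is what kills the extremal near-$K_{n,n}$ configurations in the base case $D_4$ (the paper's analysis of the graphs $\G'_1,\G'_2,\G''$ ends with an orientation argument that repeatedly invokes exactly this condition), after which an induction on $n$ --- delete a vertex pair $\pm i$, count degrees, and derive $n^2-2n>2\binom{n-1}2$ for $n>4$ --- finishes the general case. Neither of your two suggested routes supplies this: the ``transfer from $B_n$'' route founders for the reason you state, and the pure Mantel route cannot close the $n/2$ gap.

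The lower bound is also not established in your text. The paper does not induct on $n$; it writes down the explicit set $P(D_n)=\{\e_i+\e_j\mid i<j\}$ and, for each $\gamma=\e_i+\e_j$, the explicit biclosed set $L(\gamma)=\{\e_i+\e_j\}\cup\{\e_i-\e_h\mid i<h\le n,\ h\ne j\}\cup\{\e_j-\e_h\mid j<h\le n\}$, whose union over $\gamma$ is $\Dp$ with each $\gamma$ occurring only in its own $L(\gamma)$. Your inductive scheme could plausibly be made to work, but as written the $n-1$ new group elements are not produced and the preservation of privacy of the old roots is precisely the nontrivial point; you would in any case need to verify biclosedness of whatever inversion sets you propose, which is the step the paper carries out by inspecting the list of all decompositions of a positive root as a sum of two positive roots.
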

We also have proofs that 
\begin{equation}\label{geq}MC(F_4)\ge 6,\quad MC(E_6)\geq 16,\quad MC(E_7)\geq 27,\quad MC(E_8)\geq 36.\end{equation}
We conjecture that these bounds  are actually the exact values of our statistic.
We have indeed a computer assisted proof of equality in type $F_4$ (cf. Remark  \ref{f4}).\par
By using the canonical root system (cf. Section \ref{nc}), our problem further generalizes to noncrystallographic types. It is quite easy to prove that $MC(I_2(m))=2$; in particular, since $G_2$ is $I_2(6)$, we have $MC(G_2)=2$. We can  also show that $MC(H_3)=5$  and that $MC(H_4)\geq 8$. (see Section \ref{nc}).

\vskip10pt
Theorems \ref{f}, \ref{fb}, \ref{fd} are proven in two steps. First we exhibit a minimal inversion complete set of the desired cardinality, which gives a lower bound for $MC(T)$. The choice of this set is motivated by the theory of abelian ideals of Borel subalgebras. Then we prove that the desired cardinality is the maximal allowed for any minimal inversion complete set: 
this reduces to graph theoretical  considerations, which arise from Lie theoretical arguments. This step is a direct consequence of a well known result in extremal graph theory in type $A$, and it is more involved in the other cases.\par
In Section \ref{exce} we prove the lower bounds \eqref{geq}. Let $\mathfrak g$ be the simple Lie algebra  having $\D$ as root system, and  let $\mathfrak b$ be a Borel subalgebra of 
 $\mathfrak g$. As in the classical cases, these bounds  are   dimensions of abelian ideals of $\mathfrak b$.
In Section \ref{nc} we deal with noncrystallographic types. In Section \ref{spec} we make a speculation relating  the calculation of our statistics with an arithmetic property of subsets of positive roots.  Were our speculation    true, we would obtain a clear  explanation of the relationship between 
$MC(T)$ and the abelian ideals of $\mathfrak b$.
\section{Setup}
Let $\D$ be an irreducible crystallographic  root system in a Euclidean space $V$, endowed with a positive definite symmetric bilinear form 
$(\cdot,\cdot)$. We call a set $\Sigma$
  of roots symmetric if $\Sigma=-\Sigma$. 
  
 Fix a positive system $\Dp$ and let  $\Pi=\{\a_1,\ldots,\a_n\}$ be the corresponding basis of simple roots. Recall that the support of a root $\a$ is the subgraph of the Dynkin diagram whose vertices are the simple roots in which $\a$ has nonzero coefficient (cf. \cite[3.3, Definition 1]{B}).\par
  
   Let $W$ be the corresponding Weyl group, i.e. the group generated by the reflections $s_\a$ for $\a\in \D$. 
  Set $s_i=s_{\a_i},\,i=1,\ldots,n$. Recall that $\{s_i\mid 1 \leq i\leq n\}$ is a set of Coxeter generators for $W$. Let $\ell$  be the corresponding length function. \par

Set  
\begin{equation}\label{nw}N(w)=\{\a\in\Dp\mid w^{-1}(\a)\in -\Dp\}.
\end{equation}
Note  that 
\begin{equation}\label{rinw}
N(w)=\{\a\in\Dp\mid \ell(ws _\a)<\ell(w)\}.\end{equation}
Then the map $w\mapsto N(w)$ is injective. The set $N(w)$ can be recovered as follows:
if $w=s_{i_1}\cdots s_{i_k}$ is a reduced expression of $w$, then 
\begin{equation}\label{nww}
N(w)=\{\a_{i_1},s_{i_1}(\a_{i_2}),\ldots,s_{i_1}\cdots s_{i_{k-1}}(\a_{i_k})\}.
\end{equation}
Recall (cf. \cite[Chapter 3]{BB}) that the right weak Bruhat order on $W$ is the partial order defined by $v\leq w$ if there exists $u\in W$ such that 
$w=vu$ and $\ell(w)=\ell(v)+\ell(u)$. It is well-known (cf. \cite[Proposition 3.1.3]{BB}) that 
\begin{equation}\label{weak}v\leq w \iff N(v)\subseteq N(w).\end{equation}
The following characterization of the sets $N(w)$ is given e.g. in \cite[VI, Exercise 16]{B}. Let us say that $L\subseteq \Dp$ is {\it closed} if 
$$\a,\beta\in L,\,\a+\beta\in\Dp\implies \a+\beta\in L,
$$ 
 {\it coclosed} if 
 $$\a,\beta\notin  L,\,\a+\beta\in\Dp\implies \a+\beta\notin L,
 $$
 and {\it biclosed} if it is both closed  and coclosed. Here we follow \cite{Dy} for terminology.
 
\begin{prop} Given $L\subseteq \Dp$, then $L=N(w)$ for some $w\in W$ if and only if it is biclosed.
\end{prop}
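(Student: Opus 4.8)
The plan is to prove both implications separately, working with the combinatorial characterization of $N(w)$ from \eqref{nww}.

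For the forward direction, suppose $L = N(w)$ for some $w \in W$; I must show $L$ is both closed and coclosed. The cleanest way is to use the geometric interpretation: recall that $N(w) = \Dp \cap w^{-1}(-\Dp)$, so $\a \in N(w)$ precisely when $\a \in \Dp$ and $w(\a) \in -\Dp$ (reading off \eqref{nw}). For closedness, take $\a,\beta \in L$ with $\a+\beta \in \Dp$. Since $w$ acts linearly, $w(\a+\beta) = w(\a)+w(\beta)$ is a sum of two elements of $-\Dp$; because $\a+\beta$ is a root, $w(\a+\beta)$ is a root, and a root that is a sum of two negative roots must itself be negative, so $w(\a+\beta) \in -\Dp$, giving $\a+\beta \in L$. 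For coclosedness, suppose $\a,\beta \notin L$ with $\a+\beta \in \Dp$; then $w(\a),w(\beta) \in \Dp$ (each is positive since it is a root not sent to a negative root), so $w(\a+\beta)$ is a positive root, whence $\a+\beta \notin L$. Thus $L$ is biclosed.

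The reverse direction is the substantive one and I expect it to be the main obstacle. Given a biclosed $L \subseteq \Dp$, I must produce $w$ with $N(w) = L$. The natural strategy is induction on $|L|$. The base case $L = \emptyset$ gives $w = e$. For the inductive step I would argue that a nonempty biclosed $L$ contains a simple root: if $L$ contained no simple root, one could try to write a minimal-support element of $L$ as a sum of two positive roots not in $L$, contradicting coclosedness — this decomposition argument is the delicate point, since it requires showing that any positive root of minimal height in $L$ is forced to be simple. Once a simple root $\a_i \in L$ is secured, I would set $L' = s_i(L \setminus \{\a_i\})$ and verify that $L'$ is again biclosed and lies in $\Dp$, with $|L'| = |L|-1$; applying $s_i$ reflects the remaining roots while keeping them positive because removing $\a_i$ handles the only root that $s_i$ sends negative. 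By induction there is $u$ with $N(u) = L'$, and then $w = s_i u$ satisfies $N(w) = \{\a_i\} \cup s_i(N(u)) = L$ by the recursion implicit in \eqref{nww}, with $\ell(w) = \ell(u)+1$.

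The crux of the whole argument is verifying that $L' = s_i(L \setminus \{\a_i\})$ remains inside $\Dp$ and remains biclosed; both facts hinge on the elementary but essential property that $s_i$ permutes $\Dp \setminus \{\a_i\}$. The preservation of biclosedness under $s_i$ follows because $s_i$ is a linear automorphism preserving sums and roots, so it transports the closure and coclosure conditions intact, provided one checks carefully that the membership of $\a_i$ has been correctly accounted for. I would also note that injectivity of $w \mapsto N(w)$, already recorded in the excerpt, guarantees this $w$ is the unique preimage, though uniqueness is not needed for the statement itself.
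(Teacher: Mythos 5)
The paper does not actually prove this proposition: it is quoted from \cite[VI, Exercise 16]{B}, so there is no in-house argument to compare yours against. Your proof follows the standard route (forward direction by linearity of $w$; converse by induction on $|L|$, peeling off a simple root), and that route is sound, but several of the steps you leave as ``delicate'' or ``to be checked carefully'' are exactly where the work lies, so let me record how they close. First, you have silently switched conventions: the paper's \eqref{nw} puts $\a$ in $N(w)$ when $w^{-1}(\a)\in-\Dp$, whereas your forward direction tests $w(\a)\in-\Dp$. This is harmless for the statement itself (the two families of sets coincide, since your $N(w)$ is the paper's $N(w^{-1})$), but it clashes with your inductive step: the recursion $N(s_iu)=\{\a_i\}\cup s_i N(u)$ coming from \eqref{nww} is the one for the paper's convention, while your convention would require the product $us_i$ instead. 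Pick one convention and use it throughout. Second, the point you flag as the main obstacle does go through: take $\g\in L$ of minimal \emph{height}; if $\g$ is not simple it is a sum of two positive roots (a standard fact for crystallographic root systems), each of strictly smaller height and hence outside $L$, and coclosedness in the form \eqref{p2} then forces $\g\notin L$, a contradiction. Third, the biclosedness of $L'=s_i(L\setminus\{\a_i\})$ is \emph{not} purely ``transported'' by linearity: in checking coclosedness of $L'$ the case $\a=\a_i$ requires the identity $s_i\be=\a_i+s_i(\a+\be)$ together with the \emph{closedness} of $L$ and the fact that $\a_i\in L$, so the two conditions intertwine and the case analysis must be written out (the other cases, and closedness of $L'$, do follow by direct transport since $s_i$ permutes $\Dp\setminus\{\a_i\}$ and no simple root is a sum of two positive roots). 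Finally, the claim $\ell(s_iu)=\ell(u)+1$ is asserted without proof; it follows because $-\a_i\notin L$ gives $\a_i\notin L'=N(u)$, hence $u^{-1}(\a_i)\in\Dp$ and $s_iu$ is longer than $u$. None of these is a fatal gap, and with these details supplied your induction closes correctly.
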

Notice that coclosedness can be expressed as follows:
\begin{equation}\label{p2}
\a+\beta\in L ,\a,\be\in\Dp, \a\notin L\implies \be\in L.
\end{equation}
 \par
 \vskip5pt
\noindent

\section{Minimal inversion complete sets}\label{3} We start by recalling our  natural generalization to Weyl groups of the notion of minimal inversion complete sets given in Definition \ref{d}  for subsets of the symmetric group.

\begin{defi}\label{mincomp} We say that $Y\subset W$ is {\sl inversion complete} if $\bigcup\limits_{x\in Y}N(x)=\Dp$, and that is {\sl minimal inversion complete} if it is inversion complete and minimal  with respect to this property.
\end{defi} 
\begin{rem}ÊStrictly speaking, this definition depends on $\D$. The ambiguity might occur for the hyperoctahedral group, which is the Weyl group of root systems of type $B$ and  of type $C$ as well. 

If $\a\in V$, we set $\a^\vee=\frac{2}{(\a,\a)}\a$. Let $\D_B$ be a root system of type $B$ and set $\D_C=\D_B^\vee$. Then $\D_C$ is a root system of type $C$. 
Clearly  the same Weyl group $W$ corresponds to both root systems, but, if $w\in W$, the set $N(w)$ defined in \eqref{nw} does depend on the choice for $\D$. Write $N_B(w)$ and $N_C(w)$ to highlight the dependence on  the root system.
Since $w(\a^\vee)=w(\a)^\vee$, it is clear that $N_C(w)=N_B(w)^\vee$, so  
$$\bigcup\limits_{x\in Y}N_B(x)=\Dp_B \iff \bigcup\limits_{x\in Y}N_C(x)=\Dp_C.$$
Therefore, being inversion complete depends only on $W$.
\end{rem}
\begin{rem}\label{antichain}ÊIt is immediate from Definition \ref{mincomp} and relation \eqref{weak} that a minimal inversion complete set is an anti-chain in the weak Bruhat order.
\end{rem}
\vskip5pt
Theorem \ref{f} leads to the following\vskip5pt\noindent
{\bf Problem.}  Find the maximal cardinality $MC(T)$ of a minimal inversion complete set in a finite Weyl group of type $T$.Ê

 \begin{rem} In Section \ref{nc} Êwe will formulate the above problem in the more general context of finite reflection groups.
\end{rem}

\begin{example}\label{G2} If $W$ is a dihedral Weyl group of order $2m$, then   $MC(W)=2$.
 Indeed, if $\{s_1,s_2\}$ is a set of Coxeter generators for $W$, then the unique reduced expressions are obtained by taking prefixes or suffixes of the word $\underbrace{s_1s_2\cdots s_1s_2}_{m}$. In particular, any anti-chain has cardinality at most $2$, and by Remark \ref{antichain} we have that 
 $MC(W)\leq 2$. The converse inequality is obvious: for any $k$, $1\leq k\leq m-1$, the subset $\{\underbrace{s_1s_2\cdots }_{m-k},\underbrace{s_2s_1\cdots}_{k}\}$ is minimal complete. Since the Weyl groups of type $B_2, G_2$ coincide with $I_2(4),\,I_2(6)$, respectively, this argument proves in particular that $MC(G_2)=MC(B_2)=2$.\end{example}
We now introduce a very useful tool for our analysis.
\begin{defi}
If $Y$ is a subset of $W$, an essential root for $Y$ is a root $\a\in \Dp$ such that there is a unique $w\in Y$ with $\a\in N(w)$. In such a case,  we will write $N(\a)$ instead of $N(w)$. 

We say that $Y$ admits an essential set of roots, if, for each $w\in W$, there is an essential root $\a$ with $\a\in N(w)$. 

If  $Y$ admits an essential set of roots, then we can  choose one essential root $\a(w)\in N(w)$ for each $w\in W$. The set $\{\a(w)\mid w\in Y\}$ is called an essential set for $Y$.

We call a subset $E$ of $\Dp$ an essential set, if there is a subset $Y$ of $W$ such that $E$ is an essential set for $Y$.
\end{defi}

\begin{rem} If  $Y$ is minimal inversion complete, then there is at least one essential root in $N(w)\subset \Dp$ for each $w\in Y$, hence $Y$ admits an essential set of roots.
\end{rem}

We call a $k$-tuple $P=(\eta_1,\dots , \eta_k)\in (\D^+)^k$ a root path if   $\sum_{i=1}^j\eta_i\in\D^+$ for any $j=1,\dots,k$. We call the root $\a(P)=\sum_{i=1}^k\eta_i$ the last sum of the root path. If  $P=(\eta_1,\dots , \eta_k)$ is a root path, we let $Supp(P)=\{\eta_1,\dots , \eta_k\}$. If $S$ is a subset of $\Dp$ and $Supp(P)\subset S$, then we will say somewhat loosely that $P$ is a path in $S$.

\begin{lemma}\label{paths}Fix an essential set $S^+$ for $Y$ and let $P=(\eta_1,\dots , \eta_k)$ be a root path in $S^+$.
If  $\a(P)\in N(\gamma)$ for $\gamma\in S^+$, then $\gamma\in Supp(P)$.
\end{lemma}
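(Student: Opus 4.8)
The plan is to prove the statement by induction on the length $k$ of the path $P$, feeding on two ingredients: the coclosedness of the set $N(\gamma)$ and the uniqueness built into the notion of an essential root. Since $\gamma\in S^+$ is an essential root, $N(\gamma)$ is by definition $N(w_\gamma)$ for the unique $w_\gamma\in Y$ with $\gamma\in N(w_\gamma)$; being of the form $N(w)$, it is biclosed and in particular coclosed. I will use coclosedness in the form \eqref{p2}: if $\mu,\nu\in\Dp$ and $\mu+\nu\in N(\gamma)$, then $\mu\in N(\gamma)$ or $\nu\in N(\gamma)$, since otherwise coclosedness would keep the sum outside $N(\gamma)$.

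The core of the argument is a descent that locates one of the $\eta_i$ inside $N(\gamma)$. Writing $\beta_j=\sum_{i=1}^j\eta_i$, each $\beta_j$ is a positive root because $P$ is a root path, and $\beta_k=\a(P)\in N(\gamma)$ by hypothesis. If $k=1$ then $\a(P)=\eta_1\in N(\gamma)$ outright. If $k\ge 2$, I apply the coclosedness alternative to the decomposition $\beta_k=\beta_{k-1}+\eta_k$ into two positive roots: either $\eta_k\in N(\gamma)$, and we have found our root, or $\beta_{k-1}\in N(\gamma)$, in which case $(\eta_1,\dots,\eta_{k-1})$ is again a root path in $S^+$ whose last sum lies in $N(\gamma)$, and the inductive hypothesis yields some $\eta_i\in N(\gamma)$ with $i\le k-1$. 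Either way there is an index $i$ with $\eta_i\in N(\gamma)$, and of course $\eta_i\in S^+$.

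It remains to identify this $\eta_i$ with $\gamma$, and this last bookkeeping is the step I expect to require the most care. The key point is that each root of the essential set $S^+$ lies in $N(w)$ for exactly one $w\in Y$: indeed $\eta_i=\a(w')$ for some $w'\in Y$, and essentiality of $\eta_i$ makes $w'$ the unique element of $Y$ whose inversion set contains $\eta_i$. Since $\eta_i\in N(\gamma)=N(w_\gamma)$, this forces $w'=w_\gamma$; running the same reasoning for $\gamma$ shows $\gamma=\a(w_\gamma)=\a(w')=\eta_i$. Equivalently, two distinct elements of $S^+$ can never share a common $N(w)$, while $\gamma$ and $\eta_i$ both lie in $N(w_\gamma)$. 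Hence $\gamma=\eta_i\in Supp(P)$, which completes the proof. The only real subtlety is to keep the various uniqueness clauses aligned, so that the membership $\eta_i\in N(\gamma)$ genuinely pins down $\gamma$ rather than merely constraining the element of $Y$ to which it corresponds.
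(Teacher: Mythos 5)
Your proof is correct and is essentially the paper's own argument read in the contrapositive direction: the paper assumes $\gamma\notin Supp(P)$ and uses coclosedness of $N(\gamma)$ together with the fact that a root of $S^+$ lies in $N(\gamma)$ only if it equals $\gamma$ to conclude $\a(P)\notin N(\gamma)$, whereas you run the same two ingredients forward, first locating some $\eta_i\in N(\gamma)$ via \eqref{p2} and then identifying $\eta_i$ with $\gamma$. The uniqueness bookkeeping in your final paragraph is exactly the paper's base case $k=1$, so nothing is missing.
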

\begin{proof}
The proof is by induction on $k$. If $k=1$ then, since $\gamma,\eta_1\in S^+$, we see that $\eta_1\in N(\gamma)$ if and only if $\eta_1=\gamma$. Assume now $k>1$. If $\a(P)\in N(\gamma)$ and $\gamma\not\in Supp(P)$ then, in particular, $\gamma\not\in \{\eta_1,\dots , \eta_{k-1}\}$, so, by the induction hypothesis, $\sum_{i=1}^{k-1}\eta_i\not\in N(\gamma)$. Since $\eta_k\ne \gamma$, then $\eta_k\not\in N(\gamma)$, hence  $\a(P)=(\sum_{i=1}^{k-1}\eta_i)+\eta_k\not\in N(\gamma)$, a contradiction.
\end{proof}

 Recall from e.g. \cite[8.5]{Hum} that, given $\a,\be\in\D$, the $\a$-string through $\be$ is the set of roots of the form $\be+ i\a$ with $i\in \ganz$. We will refer to such sets of roots as {\it root strings}. It is well known that there are integers $r\le q$ such that the $\a$-string through $\be$ is $\{\be+i\a\mid r\le i \le q\}$.  We call $|q-r|$
 the length of the root string.
 
\begin{prop}\label{conditions} Let $S^+\subset \Dp$ be an essential set for $Y$  and set $S=S^+\cup (-S^+)$. Then: 
\begin{enumerate}
\item\label{nr1} If there are root paths $P,P'$ in $S^+$ such that $\a(P)=\a(P')$ then $Supp(P)\cap Supp( P')\ne\emptyset$.
\item\label{nr2} if $\a,\be\in S$ and $\a+\be\in \D$ then $\a+\be\not\in S$,
\item \label{nr3}$S$ does not contain root strings of length greater or equal to $2$.
\end{enumerate}
\end{prop}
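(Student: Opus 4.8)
The plan is to derive all three items from Lemma \ref{paths}, using the inversion completeness of $Y$ in \eqref{nr1} and one extra convexity input for the remaining case of \eqref{nr3}.

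First I would prove \eqref{nr1}. The point is that since $Y$ is inversion complete, $\Dp=\bigcup_{w\in Y}N(w)=\bigcup_{\gamma\in S^+}N(\gamma)$, the last equality because $N(\gamma)=N(w)$ as $\gamma$ runs over the essential roots $\alpha(w)$, $w\in Y$. Hence the common value $\delta:=\alpha(P)=\alpha(P')\in\Dp$ lies in $N(\gamma)$ for some $\gamma\in S^+$. Now Lemma \ref{paths} applied to $P$ yields $\gamma\in Supp(P)$, and the same lemma applied to $P'$ (legitimate since $\alpha(P')=\delta\in N(\gamma)$) yields $\gamma\in Supp(P')$; thus $\gamma\in Supp(P)\cap Supp(P')$. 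I should stress that inversion completeness is exactly what is used here: without it $\delta$ need not lie in any $N(\gamma)$, and indeed \eqref{nr1} can fail.

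Next, \eqref{nr2} reduces to \eqref{nr1} by sign bookkeeping. Using $S=-S$, I would reduce to $\alpha,\beta\in S^+$ (if both lie in $-S^+$ negate; if $\alpha\in S^+$, $-\beta\in S^+$, rewrite the hypothetical membership of $\alpha+\beta$ in $S$ as an equality expressing one element of $S^+$ as a sum of two elements of $S^+$, and invoke the all-positive case). For $\alpha,\beta\in S^+$ with $\alpha+\beta\in\Dp$: if $\alpha+\beta\in S^+$, take the paths $P=(\alpha,\beta)$ and $P'=(\alpha+\beta)$ in $S^+$, which have the same last sum $\alpha+\beta$; by \eqref{nr1} their supports meet, i.e. $\alpha+\beta\in\{\alpha,\beta\}$, which is absurd. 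So $\alpha+\beta\notin S$.

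For \eqref{nr3}, suppose $\mu_0,\ \mu_1=\mu_0+\alpha,\ \mu_2=\mu_0+2\alpha$ all lie in $S$ (three consecutive members of an $\alpha$-string). Since positivity is cut out by a linear functional, the signs of $\mu_0,\mu_1,\mu_2$ are monotone, so any member lying in $-S^+$ sits at an end while $\mu_1\in S^+$; after negating the whole string ($S=-S$) and possibly reversing it, two cases remain. If $\mu_0,\mu_1,\mu_2\in S^+$, I would use a convexity lemma: whenever $w^{-1}\mu_0,w^{-1}\mu_2\in\Dp$, their midpoint $w^{-1}\mu_1$ is positive, so $\mu_1\in N(w)$ forces $\mu_0\in N(w)$ or $\mu_2\in N(w)$; taking $w$ to be the element for which $\mu_1$ is essential gives $\mu_0$ or $\mu_2$ in $N(w)\cap S^+=\{\mu_1\}$, contradicting $\mu_0,\mu_2\neq\mu_1$. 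If instead the end root $\mu_0\in-S^+$ and $\mu_1,\mu_2\in S^+$, then the relation $\mu_0+\mu_2=2\mu_1$ shows that $(-\mu_0,\mu_1,\mu_1)$ is a root path in $S^+$ (its partial sums $-\mu_0,\ \alpha,\ \mu_2$ are positive roots) with last sum $\mu_2\in S^+$; Lemma \ref{paths} with $\gamma=\mu_2$ forces $\mu_2\in\{-\mu_0,\mu_1\}$, again impossible.

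The main obstacle is the all-positive case of \eqref{nr3}: it is the only step that does not reduce to \eqref{nr1}, \eqref{nr2}, or a path argument, since the increment $\alpha$ need not belong to $S^+$, so no competing root path in $S^+$ is available. I expect the care there to go into the convexity observation that a root which is the midpoint of two positive roots can lie in $N(w)$ only if one of those two roots does; everything else follows fairly mechanically from Lemma \ref{paths} together with the completeness of $Y$ and the elementary sign analysis.
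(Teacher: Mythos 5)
Your proof is correct and follows essentially the same route as the paper: item \eqref{nr1} via inversion completeness plus Lemma \ref{paths}, item \eqref{nr2} by the two-term root-path argument with the same sign bookkeeping, and the mixed-sign case of \eqref{nr3} via the root path $(-\mu_0,\mu_1,\mu_1)$. The only (harmless) divergence is in the all-positive case of \eqref{nr3}: where the paper chains coclosedness and closedness of $N(\a+\g)$ through the increment $\g$, you use the equivalent observation that $w^{-1}\mu_1=\tfrac{1}{2}(w^{-1}\mu_0+w^{-1}\mu_2)$ cannot be a negative root when both $w^{-1}\mu_0$ and $w^{-1}\mu_2$ are positive --- both arguments rest on the same convexity property of inversion sets.
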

\begin{proof}
Let us prove \eqref{nr1}. Since $Y$ is minimal inversion complete there is $\eta\in S^+$ such that $\a(P)=\a(P')\in N(\eta)$. Lemma \ref{paths} now implies that $\eta\in Supp(P)\cap Supp( P')$. 

Let us prove \eqref{nr2}. If $\a,\be\in S^+$ and $\a+\be\in S^+$, then $P=(\a,\be)$ and $P'=(\a+\be)$ are root paths that contradict \eqref{nr1}, thus  $\a+\be\not\in S^+$. Clearly this argument rules out also the case when $\a,\be\in -S^+$. If $\a\in S^+$ and $\be\in-S^+$ and $\a+\be\in S^+$ (or $\a+\be\in -S^+$), then $\a=-\be+(\a+\be)\in S^+$ (or $\be=-\a+(\a+\be)\in -S^+$), so  the previous argument applies.
\par
Let us prove \eqref{nr3}. If $S$ contains a root string of length greater or equal to $2$ then there are $\a\in S$ and $\g\in \D$ such that $\{\a,\a+\g,\a+2\g\}\subset S$. We can assume $\gamma\in\Dp$: if $\gamma\in-\Dp$ then it is enough to show that the string $-\a,-\a-\g,-\a-2\g$ cannot stay in $S$.
If $\a\in S^+$ then $\a\not\in N(\a+\g)$ so $\g\in N(\a+\g)$ by coclosedness \eqref{p2} of $ N(\a+\g)$. But this implies $\a+2\g\in N(\a+\g)$ by closedness of  $N(\a+\g)$, since $\a+\g\in N(\a+\g)$, a contradiction. If $\a\in - S^+$ and $\eta=\a+\g\in S^+$ then $\g=-\a+\eta$, so $\a+2\g=-\a+2\eta$, but then $\a+2\g$ is the last sum of the root path $(-\a,\eta,\eta)$ and this is not possible by \eqref{nr1}. The other cases are treated by showing that the string $-\a-2\g,-\a-\g,-\a$ cannot stay in $S$.
\end{proof}

\section{Type $A$} In this section we will give a proof of Theorem \ref{f}, exploiting the fact that the symmetric group is the Weyl group of a crystallographic root system $\D$ of type $A$. 
\subsection{Encoding the roots}We start by recalling the explicit description of the root system of type $A_{n-1}$ as given in Planche I of \cite{B}.
Let $\{\e_i\}_{i=1}^n$ denotes the standard basis of $\R^n$, then $\D=\{\e_i-\e_j\mid1 \leq i\ne j\leq n\}$ is a root system of type $A_{n-1}$ and we may choose 
 $\Dp=\{\e_i-\e_j\mid1 \leq i<j\leq n\}$.

We associate to a set $S$ of roots a directed graph $G(S)=(V(S),E(S))$ as follows: $$V(S)=\{1,\dots,n\}\text{ and }
(i,j)\in E(S) \Leftrightarrow \e_i-\e_j\in S.
$$
 Note that $\a,\be,\a+\be\in S$ if and only if they form in the graph $G(S)$ a transitive triangle, with $\a$ and $\be$ represented by consecutive arcs:
\begin{equation}
\begin{tikzpicture}[>=angle 90,baseline=0]
\draw[->] (0,-1)--(0.6,0.2);
\draw(0.6,0.2)--(1,1);
 \draw[->](1,1)-- (2,0.5);
  \draw[-](2,0.5)-- (3,0);
  \draw[->](0,-1)-- (1.8,-0.4);
  \draw[-](1.8,-0.4)-- (3,0);
   \node at (-0.3,-1) {$i$};
   \node at (1,1.3) {$j$};
   \node at (3.3,0) {$k$};
    \node[xshift=-12pt,yshift=0pt] at (0.6,0.2) {$\a$};
    \node[xshift=0pt,yshift=-12pt] at (1.8,-0.4) {$\a+\be$};
     \node[xshift=0pt,yshift=12pt] at (2,0.5){$\be$};
\end{tikzpicture}
\end{equation}

The graph of a symmetric set $S$ has only antiparallel arcs so the graph of $S$ can be substituted by the underlying undirected graph that we denote by $\G(S)$.

\begin{rem}\label{no triangle}
Let $S$ be a symmetric set of roots. Then $S$ satisfies condition \eqref{nr2} of Proposition \ref{conditions} if and only if there are no triangles in $\G(S)$. Indeed, if $\a,\be\in S$  and $\a+\be$ is a root, then  $\a+\be\not\in S$ for, otherwise, there would be a triangle. On the other hand, if there is a triangle in $\G(S)$, then, since all arcs in $G(S)$ are antiparallel, there is a transitive triangle in $G(S)$, hence condition \eqref{nr2} is not satisfied. 
\end{rem}

 Recall that $\Dp$ is a poset (usually named the {\it root poset}) under the partial order $\preceq$ defined by $\a\preceq \be$ if and only if $\be-\a$ is a sum of positive roots or zero
 (cf. e.g. \cite[10.1]{Hum}). 
 The arcs of its Hasse diagram can be labelled by the simple roots: an arc labelled by $\a_i$ joins $\a$ to $\a+\a_i$. The Hasse diagram can be represented by a staircase diagram:

$$
\begin{tikzpicture}[baseline=0,cell/.style={rectangle,draw,minimum size=7mm},rowlab/.style={xshift=4pt},collab/.style={yshift=6pt}]
\matrix[row sep=-0.1mm,column sep=-0.5pt] {\node[cell](11){};&\node[cell](12){};&\node[cell](13){};&\node[cell](14){};&\node[cell](15){$\a_1$};\\
\node[cell](21){};&\node[cell](22){};&\node[cell](23){};&\node[cell](24){$\a_2$};&\\
\node[cell](31){};&\node[cell](32){};&\node[cell](33){$\a_3$};&&\\
\node[cell](41){};&\node[cell](42){$\a_4$};&&&\\
\node[cell](51){$\a_5$};&&&&\\};
\node[rowlab] at (15.south east) {$1$};
\node[rowlab] at (24.south east) {$2$};
\node[rowlab] at (33.south east) {$3$};
\node[rowlab] at (42.south east) {$4$};
\node[collab] at (11.north east) {$5$};
\node[collab] at (12.north east) {$4$};
\node[collab] at (13.north east) {$3$};
\node[collab] at (14.north east) {$2$};
\end{tikzpicture}
$$

 Here the diagram for type $A_5$ is shown. The cells are the vertices of the Hasse diagram and the border between two cells is the arc between the two vertices. The direction of the arc is right-left for a vertical border and down-up for a horizontal border. The label for each arc is given by the number at its right for a horizontal border and the number on top of it for a vertical border. This means that, if the  border belongs to a line labelled by $i$, then the corresponding arc in the Hasse diagram is decorated by $\a_i$. 
 In particular, an element  of the poset corresponds to the root obtained by choosing a descending path (i.e. going right and down) from the corresponding cell to a simple root $\a_i$ and then  summing to $\a_i$ the roots labeling the arcs of the descending path.  For example, the highest root  $\theta$ (i.e. the cell on the upper left corner) can be reached following the first row hence $\theta=\a_1+\a_2+\a_3+\a_4+\a_5$. In the following we identify positive roots and cells using the above encoding.
 
 Passing from this encoding to the $\e$-coordinates is very easy: label the $i$-th row from the top of the diagram with $i$, the $j$-th column from the right with $j+1$. Then the root $\e_i-\e_j$ corresponds to the $(i,j)$-cell. For example, consider the gray cells in the following figure.
\begin{equation}\label{A5}
\begin{tikzpicture}[baseline=0,cell/.style={rectangle,draw,minimum size=10mm},phcell/.style={minimum size=10mm},big/.style={rectangle,draw,minimum size=10mm,minimum width=15mm},rowlab/.style={xshift=10pt},collab/.style={yshift=10pt}]
\matrix[row sep=-0.4pt,column sep=-0.5pt] {
\node[cell,fill=gray!50](11){};&\node[cell,fill=gray!50](12){};&\node[cell,fill=gray!50](13){};&\node[cell](14){};&\node[cell](15){};\\
\node[cell,fill=gray!50]{};&\node[cell,fill=gray!50]{};&\node[cell,fill=gray!50]{};&\node[cell](24){};\\
\node[cell,fill=gray!50]{};&\node[cell,fill=gray!50]{};&\node[cell,fill=gray!50](33){};\\
\node[cell]{};&\node[cell](42){};\\
\node[cell](51){};\\};
\node[rowlab] at (15.east) {$1$};
\node[rowlab] at (24.east) {$2$};
\node[rowlab] at (33.east) {$3$};
\node[rowlab] at (42.east) {$4$};
\node[rowlab] at (51.east) {$5$};
\node[collab] at (15.north) {$2$};
\node[collab] at (14.north) {$3$};
\node[collab] at (13.north) {$4$};
\node[collab] at (12.north) {$5$};
\node[collab] at (11.north) {$6$};
\end{tikzpicture}
\end{equation}
The corresponding set of roots is
$$
\{\e_i-\e_j\mid 3\le i<j\le 6\}.
$$

 \subsection{Proof of Theorem \ref{f}}
   After the identification $\e_i-\e_j\leftrightarrow (j,i)$, \eqref{nw} yields exactly for $N(w)$ the set of the inversions of the permutation $w\in S_n$, so the proof of Theorem \ref{f} reduces to showing  that $MC(A_{n-1})=\lfloor \frac{n^2}{4}\rfloor$.

Set   $N=\lfloor \frac{n}{2}\rfloor,\,N'=\lceil \frac{n}{2}\rceil$. Define
\begin{equation}\label{A}P(A_{n-1})=\{\e_i-\e_{n-j+1}\in\Dp\mid 1\leq i\leq N, 1\leq j\leq N'\}.\end{equation} 
The set $P(A_5)$ is the highlighted subset in the diagram \eqref{A5}, while $P(A_6)$  is the highlighted subset of the diagram below.
\begin{equation}\label{A6}
\begin{tikzpicture}[baseline=0,cell/.style={rectangle,draw,minimum size=10mm},rowlab/.style={xshift=4pt},collab/.style={yshift=6pt},dot/.style={circle,fill,inner sep=0,minimum size=3pt}]
\matrix[row sep=-0.1mm,column sep=-0.5pt] {\node[cell,fill=gray!50](11){};&\node[cell,fill=gray!50](12){};&\node[cell,fill=gray!50](13){};&\node[cell,fill=gray!50](14){};&\node[cell](15){};&\node[cell](16){};\\
\node[cell,fill=gray!50](21){};&\node[cell,fill=gray!50](22){};&\node[cell,fill=gray!50](23){};&\node[cell,fill=gray!50](24){};&\node[cell](25){};&\\
\node[cell,fill=gray!50](31){};&\node[cell,fill=gray!50](32){};&\node[cell,fill=gray!50](33){};&\node[cell,fill=gray!50](34){};&&\\
\node[cell](41){};&\node[cell](42){};&\node[cell](43){};&&&\\
\node[cell](51){};&\node[cell](52){};&&&&\\
\node[cell](51){};&&&&&\\};
\draw[thick](-1.5,0.5)--(-1.5,1.5)--(0.5,1.5);
\node[rowlab] at (16.south east) {$1$};
\node[rowlab] at (25.south east) {$2$};
\node[rowlab] at (34.south east) {$3$};
\node[rowlab] at (43.south east) {$4$};
\node[rowlab] at (52.south east) {$5$};
\node[collab] at (11.north east) {$6$};
\node[collab] at (12.north east) {$5$};
\node[collab] at (13.north east) {$4$};
\node[collab] at (14.north east) {$3$};
\node[collab] at (15.north east) {$2$};
\node[dot]  at (-1.5,0.5){};
\node[dot](gamma)  at (-1.5,1.5){};
\node[dot] (alfa) at (0.5,1.5){};
\node[dot] (beta) at (-1.5,-0.5){};
\node[yshift=5pt,xshift=-5pt] at (gamma.north west) {$\gamma$};
\node[xshift=-5pt] at (beta.west) {$\be$};
\node[yshift=5pt]  at (alfa.north) {$\alpha$};
\end{tikzpicture}
\end{equation}

This set has  the following two properties:
\begin{enumerate}
\item For any $\gamma\in P(A_{n-1})$, $\gamma=\a+\beta,\,\a,\beta\in\Dp$, then either $\a\in P(A_{n-1}),\beta\notin P(A_{n-1})$ or $\a\notin P(A_{n-1}),\beta\in P(A_{n-1})$.
\item Any positive root $\a\in \Dp\backslash P(A_{n-1})$ appears as a summand in a decomposition of a root of $P(A_{n-1})$ as a sum of two positive roots.
\end{enumerate}
These properties are easily checked using the following remark. Given a positive root $\gamma$, consider its hook within the above diagram. A decomposition of $\gamma$ as a sum of two positive roots  is given by  the rightmost cell in the arm of the hook and by  the root just below $\gamma$ in the leg. For example, in \eqref{A6}, $\g=\a_2+\a_3+\a_4+\a_5=\a+\be$ with $\a=\a_2+\a_3$, $\be=\a_4+\a_5$. The other decompositions (if any) are obtained by moving by one step left in the arm and down in the leg, at the same time. 
For any $\gamma\in P(A_{n-1})$, set
$$L(\gamma)=\{ \gamma\}\cup\{\a\in\Dp\mid \a\notin P(A_{n-1}), \gamma-\a\in \Dp\}.$$
\begin{lemma} 
\label{31} For any $\gamma\in P(A_{n-1})$, the set $L(\gamma)$ is biclosed.
\end{lemma}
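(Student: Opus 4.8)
The plan is to first pin down $L(\g)$ explicitly in $\e$-coordinates and then read off both closure properties almost by inspection. Write $\g=\e_a-\e_c$; since $\g\in P(A_{n-1})$ and $N+N'=n$, the conditions defining $P(A_{n-1})$ read $a\le N$ and $c\ge N+1$. Every way of writing $\g$ as a sum of two positive roots has the form $\g=(\e_a-\e_m)+(\e_m-\e_c)$ with $a<m<c$, so the roots $\a$ with $\g-\a\in\Dp$ are exactly the \emph{arm} roots $\e_a-\e_m$ and the \emph{leg} roots $\e_m-\e_c$. Deciding membership in $P(A_{n-1})$ reduces to locating $m$ relative to $N$: an arm root $\e_a-\e_m$ lies outside $P(A_{n-1})$ iff $m\le N$, and a leg root $\e_m-\e_c$ lies outside $P(A_{n-1})$ iff $m\ge N+1$. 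Hence
\[
L(\g)=\{\g\}\cup\{\e_a-\e_m\mid a<m\le N\}\cup\{\e_m-\e_c\mid N+1\le m<c\},
\]
i.e. $L(\g)$ is the hook of $\g$, truncated at the boundary between the first $N$ and the last $N'$ indices.

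For closedness I would show that no two elements of $L(\g)$ sum to a positive root, so that the condition holds vacuously. In the directed graph $G(L(\g))$ every arc starts either at $a$ (the arm roots together with $\g$) or at some vertex of $\{N+1,\dots,c-1\}$ (the leg roots), while every arc ends either in $\{a+1,\dots,N\}$ (the arm roots) or at $c$ (the leg roots together with $\g$). The set of sources $\{a\}\cup\{N+1,\dots,c-1\}$ and the set of targets $\{a+1,\dots,N\}\cup\{c\}$ are disjoint, so no two arcs are consecutive; equivalently, for $\a,\be\in L(\g)$ the sum $\a+\be$ is never a root. Closedness therefore holds trivially.

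For coclosedness I would use the reformulation \eqref{p2}: it suffices to show that every decomposition $\delta=\a+\be$ with $\delta\in L(\g)$ and $\a,\be\in\Dp$ has at least one summand in $L(\g)$. There are three cases according to the type of $\delta$. If $\delta=\g$, the splitting index $m$ satisfies either $m\le N$, putting the arm summand $\e_a-\e_m$ into $L(\g)$, or $m\ge N+1$, putting the leg summand $\e_m-\e_c$ into $L(\g)$; this is precisely property (1) of $P(A_{n-1})$ recorded above. If $\delta=\e_a-\e_m$ is an arm root ($a<m\le N$), then any decomposition $\delta=(\e_a-\e_p)+(\e_p-\e_m)$ has $a<p<m\le N$, so the left summand $\e_a-\e_p$ is again an arm root and lies in $L(\g)$; symmetrically, any decomposition of a leg root keeps both pieces among the indices $\ge N+1$, so the leg summand stays in $L(\g)$. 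This establishes coclosedness and completes the proof.

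The only genuine work lies in the first step and in the careful bookkeeping at the $N$ versus $N+1$ boundary; once $L(\g)$ is identified as a truncated hook, the decisive structural fact is that its arm lives entirely among the first $N$ indices and its leg entirely among the last $N'$ indices, which is exactly what forces both closure conditions. I do not expect any real obstacle beyond keeping the three cases and the degenerate situations straight (an empty arm when $a=N$, an empty leg when $c=N+1$, or $\g$ simple), but these are all disposed of vacuously by the same argument.
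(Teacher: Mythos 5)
Your proof is correct and follows essentially the same route as the paper's: you identify $L(\gamma)$ as the truncated hook, prove closedness by showing no two of its elements sum to a root, and prove coclosedness via \eqref{p2} by checking that every decomposition of an element of $L(\gamma)$ has a summand in $L(\gamma)$. The only difference is cosmetic — you phrase the closedness step through disjointness of sources and targets in the graph encoding, where the paper argues via the hook picture and disconnectedness of supports.
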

\begin{proof} To prove closedness,  we observe that no two roots from $L(\gamma)$ sum to a root. This is clear if the two summands both  belong 
to the arm (or to the leg) of $\gamma$, and it follows from the fact that the union of their supports is disconnected 
in the other cases.
\par To check co-closedness  we use \eqref{p2}. 
Let $\a,\beta$ be positive roots such that $\a+\be\in L(\gamma)\setminus\{\gamma\}$, and assume that $\a\notin L(\gamma)$. We now prove that $\be\in L(\gamma)$. This is clear when $\a+\be=\gamma$ because of property (1) of $P(A_{n-1})$ above. If $\a+\be\in L(\gamma)\backslash\{\gamma\}$, then  $\gamma=\a+\beta+\eta$
 for some $\eta\in P(A_{n-1})$. Then, by the definition of $P(A_{n-1})$,  both $\a$ and $\be$ do not belong to $P(A_{n-1})$ and by construction there is exactly one of them, necessarily  $\be$, such that 
 $\gamma-\be\in\Dp$. Then $\be\in L(\gamma)$ and \eqref{p2} is fulfilled.
 \end{proof}

Let $w_\gamma\in S_n$ be such that $N(w_\gamma)=L(\gamma)$ and set $Y(A_{n-1})=\{w_\gamma\mid \gamma\in P(A_{n-1})\}$. 
\begin{prop}\label{32} $Y(A_{n-1})$ is a minimal inversion complete set. In particular, we have that  $MC(A_{n-1})\geq\lfloor \frac{n^2}{4}\rfloor$.
\end{prop}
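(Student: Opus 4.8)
The plan is to establish the three ingredients that together give the statement: that $Y(A_{n-1})$ is inversion complete, that it is minimal for inclusion, and that $|Y(A_{n-1})|=N\cdot N'=\lfloor n^2/4\rfloor$. Since Lemma \ref{31} guarantees that each $L(\gamma)$ is biclosed, the characterization of the sets $N(w)$ as the biclosed subsets of $\Dp$ (together with the injectivity of $w\mapsto N(w)$) ensures that each $w_\gamma$ is a well-defined element with $N(w_\gamma)=L(\gamma)$. Everything can therefore be phrased purely in terms of the sets $L(\gamma)$.

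First I would check inversion completeness, i.e. $\bigcup_{\gamma\in P(A_{n-1})}L(\gamma)=\Dp$. Every $\gamma\in P(A_{n-1})$ lies in $L(\gamma)$ by definition, so $P(A_{n-1})$ is covered. For a root $\a\in\Dp\setminus P(A_{n-1})$, property (2) of $P(A_{n-1})$ supplies some $\gamma\in P(A_{n-1})$ with $\gamma=\a+\beta$, $\beta\in\Dp$; then $\a\notin P(A_{n-1})$ and $\gamma-\a=\beta\in\Dp$, so $\a\in L(\gamma)$. Hence the union is all of $\Dp$.

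The decisive observation for the remaining points is that each $\gamma\in P(A_{n-1})$ is an essential root for $Y(A_{n-1})$. Indeed $\gamma\in L(\gamma)$, while for $\gamma'\neq\gamma$ the set $L(\gamma')$ consists of $\gamma'$ together with roots lying \emph{outside} $P(A_{n-1})$; since $\gamma\in P(A_{n-1})$ and $\gamma\neq\gamma'$, we conclude $\gamma\notin L(\gamma')$. Thus $\gamma$ occurs in $N(w_\gamma)$ and in no other $N(w_{\gamma'})$. This yields minimality at once: deleting $w_\gamma$ from $Y(A_{n-1})$ leaves $\gamma$ uncovered, so the set is minimal with respect to inversion completeness. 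It also forces $\gamma\mapsto w_\gamma$ to be injective, because $w_\gamma=w_{\gamma'}$ would give $L(\gamma)=L(\gamma')$ and hence, by essentialness of $\gamma$, $\gamma=\gamma'$.

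Finally I would count: the roots $\e_i-\e_{n-j+1}$ with $1\le i\le N$, $1\le j\le N'$ are pairwise distinct and all positive (one checks $i\le N<N+1\le n-j+1$), so $|P(A_{n-1})|=N\cdot N'=\lfloor n/2\rfloor\lceil n/2\rceil=\lfloor n^2/4\rfloor$. Combined with the injectivity of $\gamma\mapsto w_\gamma$ this gives $|Y(A_{n-1})|=\lfloor n^2/4\rfloor$, whence $MC(A_{n-1})\ge\lfloor n^2/4\rfloor$. There is no serious obstacle in this direction: Lemma \ref{31} carries the structural burden, and the only point demanding care is the essential-root argument, which rests entirely on the elementary fact that $L(\gamma')\setminus\{\gamma'\}$ is disjoint from $P(A_{n-1})$.
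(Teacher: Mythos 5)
Your proof is correct and follows essentially the same route as the paper's: completeness from property (2) of $P(A_{n-1})$, and minimality from the observation that each $\gamma\in P(A_{n-1})$ lies in $N(w_\gamma)$ and in no other $N(w_{\gamma'})$ (since $L(\gamma')\setminus\{\gamma'\}$ is disjoint from $P(A_{n-1})$ by construction). You merely spell out the details — the essential-root formulation, the injectivity of $\gamma\mapsto w_\gamma$, and the count $|P(A_{n-1})|=\lfloor n/2\rfloor\lceil n/2\rceil=\lfloor n^2/4\rfloor$ — that the paper leaves implicit.
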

\begin{proof} The fact that $\bigcup\limits_{w\in S_n}N(w)=\Dp$ follows from property (2). Minimality is clear by construction: any root  $\gamma\in P(A_{n-1})$ appears only in $N(w_\gamma)$.
\end{proof}

\begin{proof}[Proof of  Theorem \ref{f}] By  Proposition \ref{32}, it suffices to prove that a minimal inversion complete set $Y$ has at most   $\lfloor \frac{n^2}{4}\rfloor$ elements. Let $S^+$ be an essential set for $Y$ and set $S=S^+\cup -S^+$.  By Proposition \ref{conditions} \eqref{nr3} and Remark  \ref{no triangle}, there is no triangle in $\G(S)$. By  Mantel's Theorem (see \cite{Tu}) a triangle-avoiding graph on $n$ vertices has at most  $\lfloor \frac{n^2}{4}\rfloor$ edges.
\end{proof} 
\section{Type $B$}\label{Type B}
\subsection{Encoding the roots} 
Recall (cf. Planche II of \cite{B}) that the roots for type $B_n$ are, in $\epsilon$-coordinates,
$$
\D=\{\pm(\e_i\pm\e_j)\mid 1\le i< j\le n\}\cup\{\pm\e_i\mid 1\le i\le n\}.
$$
A set of positive roots is
$$
\Dp=\{\e_i\pm\e_j\mid 1\le i< j\le n\}\cup\{\e_i\mid 1\le i\le n\}.
$$

We associate to a set $S$ of roots a directed graph $G(S)=(V(S),E(S))$ as follows: $$V(S)=\{0,\pm1,\dots,\pm n\}\text{ and }
(i,j)\in E(S) \Leftrightarrow sgn(i)\e_{|i|}-sgn(j)\e_{|j|}\in S.
$$

 Note that a root is represented by two arcs: $i\to j$ and $-j\to -i$. Indeed the graphs obtained in this way are invariant under the arc-reversing transformation $i\mapsto -i$. For example the root $\e_1-\e_2$ corresponds to the arcs $1\to 2$ and $-2\to -1$, while the root $\e_1$ corresponds to the arcs $1\to 0$ and $0\to -1$. This annoying feature is balanced by the fact that $\a,\be,\a+\be\in S$ if and only if they form in the graph $G(S)$ a pair of transitive triangles, with $\a$ and $\be$ represented by consecutive arcs:
\begin{equation}\label{sumofroots}
\begin{tikzpicture}[>=angle 90,baseline=0]
\draw[->] (0,-1)--(0.6,0.2);
\draw(0.6,0.2)--(1,1);
 \draw[->](1,1)-- (2,0.5);
  \draw[-](2,0.5)-- (3,0);
  \draw[->](0,-1)-- (1.8,-0.4);
  \draw[-](1.8,-0.4)-- (3,0);
   \node at (-0.3,-1) {$i$};
   \node at (1,1.3) {$j$};
   \node at (3.3,0) {$k$};
       \node[xshift=-12pt,yshift=0pt] at (0.6,0.2) {$\a$};
    \node[xshift=0pt,yshift=-12pt] at (1.8,-0.4) {$\a+\be$};
     \node[xshift=0pt,yshift=12pt] at (2,0.5){$\be$};
\begin{scope}[xshift=5cm,>=angle 90]
\draw[-] (0,-1)--(0.5,0);
\draw[<-](0.5,0)--(1,1);
 \draw[-](1,1)-- (1.8,0.6);
  \draw[<-](1.8,0.6)-- (3,0);
  \draw[-](0,-1)-- (1.5,-0.5);
  \draw[<-](1.5,-0.5)-- (3,0);
   \node at (-0.3,-1) {$-i$};
   \node at (1,1.3) {$-j$};
   \node at (3.3,0) {$-k$};
       \node[xshift=-12pt,yshift=0pt] at (0.6,0.2) {$\a$};
    \node[xshift=0pt,yshift=-12pt] at (1.8,-0.4) {$\a+\be$};
     \node[xshift=0pt,yshift=12pt] at (2,0.5){$\be$};
  \end{scope}
\end{tikzpicture}
\end{equation}

 
We can also associate to $S$ another graph that we call the graph of type $C$ of $S$. It is the graph we would obtain if we used a root system of type $C$. Explicitly the graph of type $C$ is the digraph $G_C(S)=(V_C(S),E_C(S))$, where
 $V_C(S)=\{\pm1,\dots,\pm n\}$ and 
 $$
(i,j)\in E_C(S) \Leftrightarrow sgn(i)\e_{|i|}-sgn(j)\e_{|j|}\in S\text{ and }\half(sgn(i)\e_{|i|}-sgn(j)\e_{|j|})\in S
$$
Note that a long root is represented by two arcs: $i\to j$ and $-j\to -i$, while the short roots are represented by a single arc from $i$ to $-i$.  

We can go from $G(S)$ to $G_C(S)$ by substituting any pair of consecutive arcs $i\to 0\to -i$ in $G(S)$ with a single arc $i\to -i$ in $G_C(S)$.  
For example, if 
$G(S)$ is
$\begin{tikzpicture}[dot/.style={circle,fill,inner sep=0,minimum size=3pt},left/.style={xshift=-5pt},up/.style={yshift=7pt},down/.style={yshift=-7pt},right/.style={xshift=10pt},baseline=-3pt,>=angle 90]
\node[dot] (1) at (-1,0){};
\node[dot] (2) at (0,0){};
\node[dot] (3) at (1,0){};
\node[dot] (4) at (0,1){};
\node[dot] (5) at (0,-1){};
\draw[-] (1) -- (4);
\draw[->] (1) -- (-0.5,0.5);
\draw[-] (4) -- (3);
\draw[->] (4) -- (0.5,0.5);
\draw[-] (3) -- (5);
\draw[->] (5) -- (0.5,-0.5);
\draw[-] (5) -- (1);
\draw[->] (1) -- (-0.5,-0.5);
\draw[-] (1) -- (2);
\draw[->] (1) -- (-0.5,0);
\draw[-] (2) -- (3);
\draw[->] (2) -- (0.5,0);
\node[up] at (4.north){$i$};
\node[up] at (2.north){$0$};
\node[left] at (1.west){$j$};
\node[down] at (5.south){$-i$};
\node[right] at (3.east){$-j$};
\end{tikzpicture}
$, then
$G_C(S)=\begin{tikzpicture}[dot/.style={circle,fill,inner sep=0,minimum size=3pt},left/.style={xshift=-5pt},up/.style={yshift=7pt},down/.style={yshift=-7pt},right/.style={xshift=10pt},baseline=-3pt,>=angle 90]
\node[dot] (1) at (-1,0){};
\node[dot] (3) at (1,0){};
\node[dot] (4) at (0,1){};
\node[dot] (5) at (0,-1){};
\draw[-] (1) -- (4);
\draw[->] (1) -- (-0.5,0.5);
\draw[-] (4) -- (3);
\draw[->] (4) -- (0.5,0.5);
\draw[-] (3) -- (5);
\draw[->] (5) -- (0.5,-0.5);
\draw[-] (5) -- (1);
\draw[->] (1) -- (-0.5,-0.5);
\draw[-] (1) -- (3);
\draw[->] (1) -- (0,0);
\node[up] at (4.north){$i$};
\node[left] at (1.west){$j$};
\node[down] at (5.south){$-i$};
\node[right] at (3.east){$-j$};
\end{tikzpicture}$.

For a symmetric set of roots $S$ both $G(S)$ and $G_C(S)$  have only antiparallel arcs $(i,j)$ and $(j,i)$, so they can be described by the underlying undirected graphs. We denote these undirected graphs by $\G(S)$ and $\G_C(S)$ respectively.
\vskip10pt
 In analogy with type $A$, we represent the Hasse diagram of the ranked poset $\Dp$  by a {\it staircase} diagram:
$$
\begin{tikzpicture}[cell/.style={rectangle,draw,minimum size=7mm},rowlab/.style={xshift=4pt},collab/.style={yshift=6pt}]
\matrix[row sep=-0.4pt,column sep=-0.4pt] {\node[cell](11){$\theta$};&\node[cell](12){};&\node[cell](13){};&\node[cell](14){};&\node[cell](15){};&\node[cell](16){};&\node (17) [cell]{$\a_1$};\\
&\node[cell]{};&\node[cell]{};&\node[cell]{};&\node[cell]{};&\node[cell](26){$\a_2$};\\
&&\node[cell]{};&\node[cell]{};&\node[cell](35){$\a_3$};&&\\
&&&\node[cell]{$\a_4$};&&&\\};
\node[rowlab] at (17.south east) {$1$};
\node[rowlab] at (26.south east) {$2$};
\node[rowlab] at (35.south east) {$3$};
\node[collab] at (17.north west) {$2$};
\node[collab] at (16.north west) {$3$};
\node[collab] at (15.north west) {$4$};
\node[collab] at (14.north west) {$4$};
\node[collab] at (13.north west) {$3$};
\node[collab] at (12.north west) {$2$};
\end{tikzpicture}
$$
 Here the diagram for type $B_4$ is shown. The cells are the vertices of the Hasse diagram and the border between two cells is the arc between the two vertices. The direction of the arc is right-left for a vertical border and down-up for a horizontal border. As for type $A$ the labels for the arcs are given by the numbers at the right  and  top of the borders. Similarly, a vertex of the poset corresponds to the root obtained by choosing a descending path (i.e. going right and down) from the corresponding cell to a simple root $\a_i$ and then  summing to $\a_i$ the roots labeling the arcs of the descending path. For example, the highest root  $\theta$ can be reached following the first row hence $\theta=\a_1+\a_2+\a_3+\a_4+\a_4+\a_3+\a_2=\a_1+2\a_2+2\a_3+2\a_4$.

Passing from this encoding to the $\e$-coordinates is very easy: label the $i$-th row from the top of the diagram with $i$, the $j$-th column from the right with $j+1$ if $j<n$, with $0$ if $j=n$, and with $j-2n-1$ if $j>n$. Then the root corresponding to the cell $(i,j)$ is $\e_i-sgn(j)\e_{|j|}$.

For example, consider the following diagram.
\begin{equation}\label{PBN}
\begin{tikzpicture}[cell/.style={rectangle,draw,minimum size=10mm},phcell/.style={minimum size=10mm},rowlab/.style={xshift=10pt},collab/.style={yshift=10pt},baseline=0]
\matrix[row sep=-0.4pt,column sep=-0.4pt] {
\node[cell,fill=gray!50](11){};&\node[cell,fill=gray!50](12){};&\node[cell,fill=gray!50](13){};&\node[cell,fill=gray!50](14){};&\node[cell](15){};&\node[cell](16){};&\node (17) [cell]{};\\
&\node[cell,fill=gray!50]{};&\node[cell,fill=gray!50]{};&\node[cell]{};&\node[cell]{};&\node[cell](26){};&\node[phcell](27){};\\
&&\node[cell,fill=gray!50]{};&\node[cell]{};&\node[cell](35){};&&\node[phcell](37){};\\
&&&\node[cell](44){};&&&\node[phcell](47){};\\};
\node[rowlab] at (17.east) {$1$};
\node[rowlab] at (27.east) {$2$};
\node[rowlab] at (37.east) {$3$};
\node[rowlab] at (47.east) {$4$};
\node[collab] at (17.north) {$2$};
\node[collab] at (16.north) {$3$};
\node[collab] at (15.north) {$4$};
\node[collab] at (14.north) {$0$};
\node[collab] at (13.north) {$-4$};
\node[collab] at (12.north) {$-3$};
\node[collab] at (11.north) {$-2$};
\end{tikzpicture}
\end{equation}
The set of roots corresponding to the gray cells in \eqref{PBN} is
$$
\{\e_i+\e_j\mid 1\le i<j\le 4\}\cup \{\e_1\}.
$$

\subsection{Proof of Theorem \ref{fb}}
To proceed in analogy with type $A$, we need to locate a  minimal inversion complete set of the supposed maximal cardinality, to have a lower bound
for $MC(B_n)$. Define
\begin{equation}\label{B}
P(B_n)=\{\e_i+\e_j\mid1\leq i<j\leq n\}\cup\{\e_1\}.\end{equation}
The gray cells in \eqref{PBN} afford  $P(B_4)$.

We now list all the possible decompositions of a positive root as a sum of two positive roots: if $i<j$
\begin{align}
\label{a}\e_i+\e_j&=(\e_i)+(\e_j)\\\label{b}&=(\e_i-\e_h)+(\e_h+\e_j),&1\leq i<h\leq n,\,h\ne j.
\\\label{bbb}&=(\e_i+\e_h)+(\e_j-\e_h),&1\leq j<h\leq n.\\
\label{c}
\e_i&=(\e_i-\e_h)+\e_h,& 1 \leq i<h\leq n.\\\label{dd}
\e_i-\e_j&=(\e_i-\e_h)+(\e_h-\e_j),&1\leq i<h<j\leq n.
\end{align}
If $\gamma=\e_i+\e_j$ with $i<j$ set
$$
L(\gamma)=\{\gamma\}\cup\{ \e_j\}
\cup\{\e_i-\e_h\mid i<h\leq n,\,h\ne j\}\cup\{\e_j-\e_h\mid 1\leq j<h\leq n\}.
$$
If $\gamma=\e_1$ set 
$$
L(\gamma)=\{\gamma\}\cup\{\e_1-\e_h\mid 1<h\leq n.\}.
$$
Notice that this is a slight modification of our definition of $L(\gamma)$ in type $A$. 
\begin{lemma} For any $\gamma\in P(B_n)$, $L(\gamma)$ is biclosed.
\end{lemma}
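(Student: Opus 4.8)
The plan is to verify closedness and coclosedness of $L(\gamma)$ separately, treating the two shapes of $\gamma$ (namely $\gamma=\e_i+\e_j$ with $i<j$, and $\gamma=\e_1$) in parallel. For closedness I would check directly that no two roots of $L(\gamma)$ sum to a positive root. The roots in $L(\gamma)\setminus\{\gamma\}$ are of two kinds: those of the form $\e_i-\e_h$ (with $i$ fixed, $h\ne j$) and those of the form $\e_j-\e_h$. Using the decomposition list \eqref{a}--\eqref{dd}, a sum of two such roots that is positive would have to be one of the shapes appearing there, and I would rule each out by bookkeeping on the $\e$-indices: two roots $\e_i-\e_h$ and $\e_i-\e_{h'}$ never sum to a root (repeated $+\e_i$), a root $\e_i-\e_h$ plus $\e_j-\e_{h'}$ can only sum to a root if $h=j$ or $h'=i$, both excluded by the index constraints, and so on. The short root $\e_j$ combines with $\e_i-\e_j$, but $\e_i-\e_j\notin L(\gamma)$ precisely because $h\ne j$ is imposed, and the only sum producing $\gamma$ itself is the excluded pair from \eqref{a}. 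I expect this to be a short but slightly fiddly case check.

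For coclosedness I would use the reformulation \eqref{p2}: given positive roots $\a,\be$ with $\a+\be\in L(\gamma)$ and $\a\notin L(\gamma)$, show $\be\in L(\gamma)$. The key structural observation, mirroring the type $A$ argument in Lemma \ref{31}, is that $L(\gamma)$ was built exactly as $\{\gamma\}$ together with every positive root $\a\notin P(B_n)$ that can appear as a summand in a decomposition of $\gamma$. The heart of the matter is the case $\a+\be=\gamma$, where I would invoke property (1)-type behaviour of $P(B_n)$: among the decompositions \eqref{a}--\eqref{bbb} of $\gamma=\e_i+\e_j$, in each splitting $\gamma=\a+\be$ exactly one summand lies in $L(\gamma)$ and the other lies in $P(B_n)$ (hence outside $L(\gamma)$), so if $\a\notin L(\gamma)$ then necessarily $\be\in L(\gamma)$. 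The remaining cases have $\a+\be\in L(\gamma)\setminus\{\gamma\}$, i.e.\ $\a+\be=\e_i-\e_h$ or $\e_j-\e_h$; here $\a+\be$ is already a \emph{difference} of $\e$'s, its decompositions are governed by \eqref{dd} alone, and I would show that whenever $\a\notin L(\gamma)$ the complementary summand $\be$ is forced into $L(\gamma)$ by the index ranges defining $L(\gamma)$.

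The main obstacle I anticipate is the presence of the short root $\e_j$ (and the short roots $\e_i$ appearing in decompositions \eqref{a} and \eqref{c}), which is the genuine departure from type $A$ flagged in the remark that $L(\gamma)$ is a ``slight modification'' of the type $A$ definition. Specifically, decomposition \eqref{c} writes $\e_i=(\e_i-\e_h)+\e_h$, so when $\a+\be$ equals a short root I must handle summands that are themselves short, and I must make sure the coclosedness check accounts for the asymmetry that $\e_j\in L(\gamma)$ but the other short roots $\e_h$ are not. I would therefore isolate the subcase where $\a+\be$ is short as its own argument rather than folding it into the difference-root case. For $\gamma=\e_1$ the set $L(\gamma)$ contains no short root at all and its off-diagonal part is just $\{\e_1-\e_h\}$, so closedness is immediate (all these roots share the $+\e_1$ term) and coclosedness reduces to the single relevant decomposition $\e_1-\e_h=(\e_1-\e_{h'})+(\e_{h'}-\e_h)$, making this case considerably lighter. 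Throughout, the guiding principle is the same as in type $A$: disconnected or incompatible supports prevent two elements of $L(\gamma)$ from summing to a root, while the exhaustive decomposition list \eqref{a}--\eqref{dd} guarantees that the ``missing'' summand of any decomposition of an element of $L(\gamma)$ is recaptured by $L(\gamma)$.
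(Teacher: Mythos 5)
Your plan is correct and coincides with the paper's own proof, which consists of the single line that closedness and coclosedness are verified by direct inspection using the decompositions \eqref{a}--\eqref{dd}; the case analysis you outline carries this inspection out in full and correctly isolates the one genuinely new feature relative to type $A$, namely the short roots appearing in \eqref{a} and \eqref{c}. One small inaccuracy: in the splitting $\gamma=\e_i+\e_j=(\e_i)+(\e_j)$ the summand $\e_i$ lies in $P(B_n)$ only when $i=1$, so your assertion that in every decomposition of $\gamma$ the summand outside $L(\gamma)$ belongs to $P(B_n)$ is not quite right --- but this is harmless, since all that the coclosedness condition \eqref{p2} requires is that at least one summand (here $\e_j$) lies in $L(\gamma)$.
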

\begin{proof} Direct inspection of closedness and coclosedness, done using decompositions \eqref{a} to \eqref{dd}.\end{proof}
Let $w_\gamma\in W$ be such that $N(w_\gamma)=L(\gamma)$ and set $Y(B_n)=\{w_\gamma\mid \gamma\in P(B_n)\}$. 
Similarly to type $A$, one checks that 
\begin{prop}\label{Paolo} $Y(B_n)$ is a minimal inversion complete set. In particular, we have that  $MC(B_n)\geq\binom{n}{2}+1$.
\end{prop}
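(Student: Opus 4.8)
The plan is to mirror the two-step structure of the proof of Proposition~\ref{32}: first establish that $Y(B_n)$ is inversion complete, i.e.\ that $\bigcup_{\gamma\in P(B_n)}L(\gamma)=\Dp$, then establish minimality, and finally read off the cardinality. Since the preceding lemma already guarantees that each $L(\gamma)$ is biclosed, the elements $w_\gamma$ with $N(w_\gamma)=L(\gamma)$ are well defined; once the $L(\gamma)$ are seen to be pairwise distinct, injectivity of $w\mapsto N(w)$ yields $|Y(B_n)|=|P(B_n)|=\binom{n}{2}+1$, and the lower bound $MC(B_n)\ge\binom{n}{2}+1$ follows.

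For completeness I would run a short case analysis over the three families of positive roots of $B_n$, using the decompositions \eqref{a}--\eqref{dd} to locate each root inside a suitable $L(\gamma)$. The sum roots $\e_i+\e_j$ with $i<j$ lie in $L(\e_i+\e_j)$ by definition. A short root $\e_i$ with $i\ge 2$ appears as the distinguished short root of $L(\e_a+\e_i)$ for any $a<i$ (cf.\ \eqref{a}), while $\e_1$ is supplied by $L(\e_1)$. A difference root $\e_i-\e_j$ with $i\ge 2$ occurs in the ``leg'' $\{\e_i-\e_h\}$ of $L(\e_a+\e_i)$ for any $a<i$ (cf.\ \eqref{bbb}); and each $\e_1-\e_j$ is furnished by $L(\e_1)$ (cf.\ \eqref{c}). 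This exhausts $\Dp$.

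For minimality I would show that every $\gamma\in P(B_n)$ is an essential root of $Y(B_n)$, occurring in no $L(\gamma')$ with $\gamma'\ne\gamma$. The key observation is that the only root of ``sum type'' $\e_a+\e_b$ appearing in any $L(\gamma')$ is $\gamma'$ itself when $\gamma'=\e_a+\e_b$, and that $L(\e_1)$ contains no sum root at all; hence $\e_i+\e_j$ lies only in $L(\e_i+\e_j)$. Likewise $\e_1$ is never the distinguished short root of an $L(\e_a+\e_b)$ (there the short root has index $b\ge 2$), so $\e_1$ lies only in $L(\e_1)$. Essentiality of each $\gamma$ forces the $L(\gamma)$ to be pairwise distinct, giving $|Y(B_n)|=\binom{n}{2}+1$, and it shows that deleting any $w_\gamma$ removes the root $\gamma$ from the union; thus $Y(B_n)$ is minimal.

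I expect the only delicate point to be the bookkeeping in the extremal-index cases of the completeness step, where the generic choice $\gamma=\e_a+\e_i$ with $a<i$ is unavailable: these are precisely the roots $\e_1$ and $\e_1-\e_j$, and they explain why the singleton $\e_1$ must be adjoined to $P(B_n)$ in \eqref{B}. Verifying that this one extra generator repairs completeness while remaining essential is the crux; everything else is the routine inspection already signalled by the phrase ``similarly to type $A$''.
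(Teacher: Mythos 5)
Your proposal is correct and follows essentially the same route as the paper, which at this point simply asserts the result ``similarly to type $A$'': completeness is checked by locating every positive root in some $L(\gamma)$ via the decompositions \eqref{a}--\eqref{dd}, and minimality follows because each $\gamma\in P(B_n)$ (being the unique sum root of $L(\gamma)$, resp.\ the unique $L$ containing $\e_1$) is an essential root, which also gives the count $\binom{n}{2}+1$. The details you supply, including the treatment of the extremal roots $\e_1$ and $\e_1-\e_j$ through the extra generator $\e_1$, are exactly the bookkeeping the paper leaves to the reader.
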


Let $Y$ be a minimal inversion complete set and let $S^+$ be an essential set for $Y$. By Proposition \ref{Paolo}, in order to prove Theorem \ref{fb}, we need only to to show that $|S^+|\le  \binom{n}{2}+1$. To check this we will use the graph associated to  $S=S^+\cup (-S^+)$. 

First of all, we see how the conditions of Proposition \ref{conditions} translate in our encoding of sets of roots as directed graphs. Let $\s$ be the arc reversing involution of $G(S)$ mapping each vertex $i$ into $-i$. 
 
 It is clear that to a root path $P\subset (\Dp)^k$ corresponds a pair of paths in the directed graph: so condition \eqref{nr1} of Proposition \ref{conditions} says that if two vertices $i,j$ with $i\ne \pm j$ are joined by two paths $P$, $P'$ in $\G(S)$ then either $P$ and $P'$ or $P$ and $\s(P')$ have an arc in common.
 
As in Remark \ref{no triangle},  condition \eqref{nr2} is equivalent to saying that in 
 $\G(S)$ there are no triangles.
 
 Condition \eqref{nr2} and condition \eqref{nr3} hold if and only if both in  $\G(S)$ and in  $\G_C(S)$ there are no triangles. Indeed, if a string of roots of length at least $2$ occurs, then there is a subgraph of $\G(S)$ of this type:
\begin{equation}\label{stringtwo}
\begin{tikzpicture}[baseline=0,dot/.style={circle,fill,inner sep=0,minimum size=3pt},left/.style={xshift=-5pt},up/.style={yshift=7pt},down/.style={yshift=-7pt},right/.style={xshift=10pt}]
\node[dot] (1) at (-1,0){};
\node[dot] (2) at (0,0){};
\node[dot] (3) at (1,0){};
\node[dot] (4) at (0,1){};
\node[dot] (5) at (0,-1){};
\draw[-] (1) -- (2);
\draw[-] (2) -- (3);
\draw[-] (3) -- (4);
\draw[-] (3) -- (5);
\draw[-] (1) -- (4);
\draw[-] (1) -- (5);
\node[up] at (4.north){$i$};
\node[up] at (2.north){$0$};
\node[left] at (1.west){$j$};
\node[down] at (5.south){$-i$};
\node[right] at (3.east){$-j$};
\end{tikzpicture}
\end{equation}
so in  $\G_C(S)$ there is a triangle. On the other hand, if there is a triangle in the graph of type $C$ and the triangle does not have edges $\{j,-j\}$ between opposite vertices, then there is a triangle in $\Gamma(S)$, contradicting condition \eqref{nr2}. If there is a triangle in $\Gamma_C(S)$ with an edge $\{j,-j\}$, then in $\Gamma(S)$ there is a subgraph as in \eqref{stringtwo}, so there is a string of roots of length at least $2$.

\subsubsection{Low rank cases}
By Example \ref{G2} we have that $MC(B_2)=2$. We now prove that $MC(B_n)=\binom{n}{2}+1$ for $n=3,4$. The proof will be a case by case check. 
\vskip0.5cm
\textsc{Case I: $B_3$.}\quad 
By Proposition \ref{Paolo}, we need only to check that $MC(B_3)\le 4$.
Let $Y$ be a minimal inversion complete set for $B_3$, $S^+$ an essential set for $Y$,  and  $S=S^+\cup -S^+$. We need to check that $|S|\le 8$. It clearly suffices to show that, if $S$ satisfies conditions \eqref{nr2} and \eqref{nr3} of Proposition \ref{conditions}, then $|S|\le 8$. 

 If no vertex is connected to $0$, then $\G(S)=\G_C(S)$ and $\G_C(S)$ cannot have triangles, so it has at most $9$ edges. Since, by construction,   $\G(S)$ has an even number of edges, we deduce that $|S|\le 8$. If there is a vertex $i$ connected to $0$, also $-i$ is connected to $0$. Consider the subgraph $\G'$ of $\G(S)$ induced by $V(S)\backslash\{\pm i\}$. The graph $\G'$ cannot have triangles and also its graph of type $C$ cannot have triangles, so, by what we proved in   the $B_2$ case, $\G'$ has at most $4$ edges.  If  $j\ne i$ and $j\ne 0$, then $j$  cannot be connected to both $i$ and $-i$, for, otherwise, there would be a triangle in the graph of type $C$. But then there is at most one edge connecting $j$ to $\pm i$. Since there are $4$ vertices different from $\pm i$ and different from $0$, there are at most $4$ edges connecting $\pm i$ to the other nonzero vertices. It follows that the total number of edges is less or equal to $10$. If $|S|=10$ then $i$ must be connected to at least $2$ vertices $j,h$ other than $0$ and $-i$ is connected  to $-i,-h$. Thus $\G(S)$ contains the graph
$$
\begin{tikzpicture}[scale=0.2,
dot/.style={circle,fill,inner sep=0,minimum size=3pt},right/.style={xshift=10pt},up/.style={yshift=7pt},down/.style={yshift=-7pt},left/.style={xshift=-10pt}]
\node[dot] (0) at (0,0){};
\node[dot] (1) at (0,4){};
\node[dot] (2) at (4,2){};
\node[dot] (3) at (4,-2){};
\node[dot] (4) at (0,-4){};
\node[dot] (5) at (-4,-2){};
\node[dot] (6) at (-4,2){};
\node[right] at (0.east){$0$};
\node[up] at (1.north){$i$};
\node[right] at (2.east){$j$};
\node[right] at (3.east){$-h$};
\node[down] at (4.south){$-i$};
\node[left] at (5.west){$-j$};
\node[left] at (6.west){$h$};
\draw[-] (0) -- (1);
\draw[-] (0) -- (4);
\draw[-] (1) -- (2);
\draw[-] (1) -- (6);
\draw[-] (3) -- (4);
\draw[-] (4) -- (5);
\end{tikzpicture}
$$
It is now clear that we can only add $2$ edges to the graph above (the edge connecting $j$ and $-h$ and the edge connecting $-j$ and $h$) if we want to avoid triangles in $\G(S)$ and in $\G_C(S)$. This contradicts the assumption that $|S|=10$, and we are done in this case.
\vskip0.5cm
\textsc{Case II: $B_4$.}\quad In this case we will use also condition \eqref{nr1} of Proposition \ref{conditions}.

By Proposition \ref{Paolo}, we need only to check that $MC(B_4)\le 7$.
Let $Y$ be a minimal inversion complete set for $B_4$, $S^+$ an essential set for $Y$,  and  $S=S^+\cup -S^+$. We need to check that $|S|\le 14$. It clearly suffices to show that, if $S$ satisfies the conditions of Proposition \ref{conditions}, then $|S|\le 14$. 

Suppose first that there are no vertices connected to $0$. Then $\G(S)=\G_C(S)$. Since $\G_C(S)$ has no triangles, there are at most $16$ edges. Assume $|S|=16$.
In this case, by Tur{\'a}n's Theorem (\cite{Tu}), $\Gamma(S)$ is 
$$
\begin{tikzpicture}[scale=0.4,dot/.style={circle,fill,inner sep=0,minimum size=3pt},left/.style={xshift=10pt},up/.style={yshift=7pt},down/.style={yshift=-7pt},right/.style={xshift=-10pt}]
\node[dot] (0) at (0,0){};
\node[dot] (1) at (-2,6){};
\node[dot] (2) at (2,6){};
\node[dot] (3) at (6,2){};
\node[dot] (4) at (6,-2){};
\node[dot] (-1) at (2,-6){};
\node[dot] (-2) at (-2,-6){};
\node[dot] (-3) at (-6,-2){};
\node[dot] (-4) at (-6,2){};
\draw[-] (1) -- (2);
\draw[-] (1) -- (4);
\draw[-] (1) -- (-2);
\draw[-] (1) -- (-4);
\draw[-] (2) -- (3);
\draw[-] (2) -- (-1);
\draw[-] (-3) -- (2);
\draw[-] (3) -- (4);
\draw[-] (3) -- (-2);
\draw[-] (3) -- (-4);
\draw[-] (-1) -- (4);
\draw[-] (4) -- (-3);
\draw[-] (-1) -- (-2);
\draw[-] (-4) -- (-1);
\draw[-] (-2) -- (-3);
\draw[-] (-4) -- (-3);
\end{tikzpicture}
$$
In the graph of $S^+$, the vertex $1$ is a source and the vertex $-1$ is a sink. Note that two  vertices which are not connected can be switched without changing the graph. Thus, by rearranging the vertices we can assume that the graph of $S^+$ is, up to giving orientation to some edges,
$$
\begin{tikzpicture}[>=angle 90,scale=0.4,dot/.style={circle,fill,inner sep=0,minimum size=3pt},left/.style={xshift=10pt},up/.style={yshift=7pt},down/.style={yshift=-7pt},right/.style={xshift=-10pt}]
\node[dot] (0) at (0,0){};
\node[dot] (1) at (-2,6){};
\node[dot] (2) at (2,6){};
\node[dot] (3) at (6,2){};
\node[dot] (4) at (6,-2){};
\node[dot] (-1) at (2,-6){};
\node[dot] (-2) at (-2,-6){};
\node[dot] (-3) at (-6,-2){};
\node[dot] (-4) at (-6,2){};
\node[up] at (1.north){$1$};
\node[left] at (3.east){$-1$};
\draw[-] (1) -- (2);
\draw[->] (1) -- (0,6);
\draw[-] (1) -- (4);
\draw[->] (1) -- (3,1);
\draw[-] (1) -- (-2);
\draw[->] (1) -- (-2,0);
\draw[-] (1) -- (-4);
\draw[->] (1) -- (-4,4);
\draw[-] (2) -- (3);
\draw[->] (2) -- (4,4);
\draw[-] (2) -- (-1);
\draw[-] (-3) -- (2);
\draw[-] (3) -- (4);
\draw[->] (4) -- (6,0);
\draw[-] (3) -- (-2);
\draw[->] (-2) -- (3,-1);
\draw[-] (3) -- (-4);
\draw[->] (-4) -- (0,2);
\draw[-] (-1) -- (4);
\draw[-] (4) -- (-3);
\draw[-] (-1) -- (-2);
\draw[-] (-4) -- (-1);
\draw[-] (-2) -- (-3);
\draw[-] (-4) -- (-3);
\end{tikzpicture}
$$
If the path $1\to i\to -1$ occurs, also the path $1\to -i\to -1$ must occur. Up to rearranging the vertices we obtain the graph
$$
\begin{tikzpicture}[>=angle 90,scale=0.4,dot/.style={circle,fill,inner sep=0,minimum size=3pt},left/.style={xshift=10pt},up/.style={yshift=7pt},down/.style={yshift=-7pt},right/.style={xshift=-10pt}]
\node[dot] (0) at (0,0){};
\node[dot] (1) at (-2,6){};
\node[dot] (2) at (2,6){};
\node[dot] (3) at (6,2){};
\node[dot] (4) at (6,-2){};
\node[dot] (-1) at (2,-6){};
\node[dot] (-2) at (-2,-6){};
\node[dot] (-3) at (-6,-2){};
\node[dot] (-4) at (-6,2){};
\node[up] at (1.north){$1$};
\node[up] at (2.north){$-i$};
\node[left] at (3.east){$-1$};
\node[left] at (4.east){$-j$};
\node[down] at (-1.south){$-h$};
\node[down] at (-2.south){$j$};
\node[right] at (-3.west){$h$};
\node[right] at (-4.south){$i$};
\draw[-] (1) -- (2);
\draw[->] (1) -- (0,6);
\draw[-] (1) -- (4);
\draw[->] (1) -- (3,1);
\draw[-] (1) -- (-2);
\draw[->] (1) -- (-2,0);
\draw[-] (1) -- (-4);
\draw[->] (1) -- (-4,4);
\draw[-] (2) -- (3);
\draw[->] (2) -- (4,4);
\draw[-] (2) -- (-1);
\draw[-] (-3) -- (2);
\draw[-] (3) -- (4);
\draw[->] (4) -- (6,0);
\draw[-] (3) -- (-2);
\draw[->] (-2) -- (3,-1);
\draw[-] (3) -- (-4);
\draw[->] (-4) -- (0,2);
\draw[-] (-1) -- (4);
\draw[-] (4) -- (-3);
\draw[-] (-1) -- (-2);
\draw[-] (-4) -- (-1);
\draw[-] (-2) -- (-3);
\draw[-] (-4) -- (-3);
\end{tikzpicture}
$$
Let us choose the orientation for the edge $\{i,h\}$. If the orientation is $i\to h$, then we cannot have $h\to -j$ because of condition \eqref{nr1} (we have two paths $i\to h \to -j$ and $i\to -1 \to -j$ without common roots). We cannot have $-j\to h$ either because of condition \eqref{nr1} again (we have two paths $1\to i \to h$ and $1\to -j \to h$ without common roots). So we must have the orientation $h\to i$. But again, we cannot have $h\to -j$ because of condition \eqref{nr1} (we have two paths $h\to i \to -1$ and $h\to -j \to -1$ without common roots) and we cannot have $-j\to h$ either because of condition \eqref{nr1} (we have two paths $1\to -j \to h\to i$ and $1\to i$ without common roots). It follows that $|S|<16$ in this case. Since $|S|$ is even, we obtain that $|S|\le 14$.

If there is a pair of vertices $\pm i$ that is connected to $0$, consider the subgraph of $\Gamma(S)$ induced by $V(S)\backslash\{\pm i\}$. Then this graph satisfies the properties of Proposition \ref{conditions}, thus it can have at most $8$ edges. For each $j\ne\pm i$ such that $j\ne 0$ there is at most one edge connecting $i$ to $\pm j$  so the maximum number of such edges is $3$. The same argument holds for $-i$. Finally there are the two edges connecting $\pm i$ to $0$. The total is $8+3+3+2=16$, so $|S|\le 16$.

If the graph does have $16$ edges, then, for each $h\in\{\pm 1,\dots,\pm 4\}\backslash \{\pm i\}$, the vertex $i$ must be connected to exactly one between $h,-h$. 
If there are two pairs $\pm i$, $\pm j$ connected to $0$, then $i$ must be connected to either $j$ or $-j$ and this is impossible, for, otherwise, there would be a triangle in the graph of $S$. It follows that $\pm i$ are the only vertices connected to $0$.

Let $\pm j, \pm k,\pm h$ be the nonzero vertices other that $\pm i$. As shown above we can and do assume that $i$ is connected to $j,k,h$ and $-i$ is connected to $-j,-k,-h$.
There is no connection between the vertices $j,k,h$, for, otherwise, there would be a triangle. The same argument with $-i$ shows that there are no edges between $-j,-k,-h$. The only edges between $j,k,h,-j,-k,-h$ occur between vertices in $\{j,k,h\}$ and vertices in $\{-j,-k,-h\}$. The edges $\{r,-r\}$ between opposite vertices do not occur, so there are at most $6$ edges, short of the $8$ needed. It follows that $|S|<16$ and we are done.

\subsubsection{General case}
 
 \begin{theorem} 
$MC(B_n)=\binom{n}{2}+1$.
\end{theorem}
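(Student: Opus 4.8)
The plan is to combine the lower bound of Proposition \ref{Paolo} with a matching upper bound proved by induction on $n$. Since Proposition \ref{Paolo} already gives $MC(B_n)\ge\binom{n}{2}+1$, and since for a minimal inversion complete $Y$ with essential set $S^+$ one has $|Y|=|S^+|=\half|S|=\half|\G(S)|$ (the map $w\mapsto\a(w)$ is injective because an essential root lies in a unique $N(w)$), it suffices to prove that every symmetric $S\subseteq\D$ satisfying \eqref{nr1}, \eqref{nr2}, \eqref{nr3} of Proposition \ref{conditions} has $|\G(S)|\le n^2-n+2=2\left(\binom{n}{2}+1\right)$. First I would observe that these three conditions are intrinsic to $S$ (they never refer back to $Y$) and are inherited when one restricts $S$ to the subsystem $\D'$ of type $B_{n-1}$ obtained by deleting a coordinate $\e_k$: in graph terms, removing the vertex pair $\{k,-k\}$ from $\G(S)$ yields a graph admissible for $B_{n-1}$. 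The cases $n\le 4$ are the base of the induction and have been settled above, so I may assume $n\ge 5$ and the bound $|\G(S')|\le (n-1)^2-(n-1)+2=n^2-3n+4$ for $B_{n-1}$.

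Next I set up the counting. Write $m=|\G(S)|$ and let $t$ be the number of pairs $\pm i$ adjacent to $0$, so that $\deg(0)=2t$ and $\G(S)$ has $m-2t$ non-zero edges; $\s$-invariance gives $\deg(i)=\deg(-i)$ for every $i$. Passing to $\G_C(S)$ replaces each pair of arcs $i\to 0\to -i$ by one antipodal edge $\{i,-i\}$, whence $|\G_C(S)|=(m-2t)+t=m-t$. Conditions \eqref{nr2} and \eqref{nr3} force both $\G(S)$ and $\G_C(S)$ to be triangle-free, so Mantel's theorem \cite{Tu} applied to $\G_C(S)$ (on its $2n$ vertices) gives $m-t\le n^2$. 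Now choose a pair $\pm k$ with $\deg(k)$ minimal. Since $\sum_{i\ne 0}\deg(i)=2m-2t=2|\G_C(S)|\le 2n^2$ and there are $2n$ non-zero vertices, the minimum degree $\delta:=\deg(k)$ is at most the average $|\G_C(S)|/n\le n$.

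This yields a clean dichotomy. If $\delta\le n-1$, I delete $\{k,-k\}$: because $2\e_k$ is not a root there is no edge $\{k,-k\}$, so exactly $\deg(k)+\deg(-k)=2\delta\le 2n-2$ edges are removed, leaving an admissible $B_{n-1}$ configuration. By induction $m-2\delta\le n^2-3n+4$, hence $m\le n^2-n+2$, as required. The decisive case is $\delta=n$: then every non-zero vertex has degree exactly $n$, so $|\G_C(S)|=n^2$, and by the equality case of Mantel's/Tur\'an's theorem \cite{Tu} the graph $\G_C(S)$ must be the complete bipartite graph $K_{n,n}$.

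The extremal case is where I expect the real difficulty, and I would dispose of it using the involution $\s$ together with the absence of antipodal edges in the non-zero part $H$ of $\G(S)$ (recall $2\e_k$ is not a root, so $H$ contains no edge $\{i,-i\}$). As a graph automorphism of $K_{n,n}$, $\s$ either swaps or preserves the two parts $X,Y$. If $\s$ swaps them, then for $i\in X$ we have $-i=\s(i)\in Y$, so every antipodal pair $\{i,-i\}$ is an edge of $\G_C(S)$; but the only antipodal edges of $\G_C(S)$ are those arising from adjacency to $0$, so all $n$ pairs are adjacent to $0$, forcing $0$ to be joined to every non-zero vertex. Triangle-freeness of $\G(S)$ then makes the neighbourhood of $0$ independent, i.e. $H=\emptyset$, contradicting $|\G_C(S)|=n^2$. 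If instead $\s$ preserves $X$ and $Y$ (possible only for $n$ even), each antipodal pair lies inside one part and is not an edge of $K_{n,n}$, so $t=0$ and $\G(S)=K_{n,n}$ with $m=n^2$; deleting a pair $\pm k$ (both in the same part) leaves $K_{n-2,n}$, an admissible $B_{n-1}$ configuration with $(n-2)n=n^2-2n$ edges, which for $n\ge 5$ exceeds the inductive bound $n^2-3n+4$, a contradiction. Hence $\delta=n$ cannot occur for $n\ge 5$, which closes the induction; note that condition \eqref{nr1} is not needed for $n\ge 5$ and intervenes only to settle the genuinely tight base case $B_4$.
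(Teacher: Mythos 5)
Your argument is correct, and while it shares the paper's overall skeleton (lower bound from Proposition \ref{Paolo}, reduction to bounding symmetric sets satisfying Proposition \ref{conditions}, base cases $n\le 4$, induction by deleting a vertex pair, triangle-freeness of $\G(S)$ and $\G_C(S)$ plus Mantel/Tur\'an), the inductive step is organized genuinely differently. The paper splits on whether some pair $\pm i$ is joined to $0$: if so, it bounds $|S|\le 2\binom{n}{2}+4$ by a direct edge count and then rules out equality by a separate hands-on argument; if not, it shows every vertex has degree exactly $n$, exhibits $n^2$ edges between the neighbourhood of a vertex and its complement, and contradicts the inductive bound by deleting a pair. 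You instead absorb the vertex $0$ once and for all into the identity $|\G_C(S)|=m-t$, split on the minimum degree $\delta$, dispose of $\delta\le n-1$ in one line, and in the extremal case $\delta=n$ invoke the uniqueness of the Mantel extremal graph to pin down $\G_C(S)=K_{n,n}$, which the antipodal involution $\s$ then eliminates (either every antipodal pair is an edge, forcing all pairs to be $0$-adjacent and collapsing the graph, or none is, and $K_{n-2,n}$ already violates the inductive bound). The endpoint is the same contradiction the paper reaches by hand, but your route replaces its two separate case analyses by one uniform degree dichotomy plus the rigidity of $K_{n,n}$, at the price of appealing to the equality case of Mantel's theorem for all $n$ (the paper only needs it in the $B_4$ base case); like the paper, you use condition \eqref{nr1} only in the base case.
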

\begin{proof}
By Proposition \ref{Paolo}, we need only to prove that $MC(B_n)\le \binom{n}{2}+1$. This has been proved for $n=2,3,4$ above.
Note that we have actually proved in these cases  that, if $S^+$ satisfies the conditions of Proposition \ref{conditions}, then $|S^+|\le \binom{n}{2}+1$.

If $n\ge 4$, we will prove by induction on $n$ that, if a set $S^+\subset \Dp$ satisfies the conditions of Proposition \ref{conditions}, then $|S^+| \le \binom{n}{2}+1$. 
The base of the induction is the case $n=4$. Assume now $n>4$.

 Set $S=S^+\cup -S^+$. Assume first that there is a pair of vertices $\pm i$ connected to $0$.
Consider the subgraph of $\G(S)$ induced by   $V(S)\backslash\{\pm i\}$ as vertices. Then this graph satisfies the properties of Proposition \ref{conditions}, thus it can have at most $ 2\binom{n-1}{2}+2$ edges. For each $j\ne\pm i$ such that $j\ne 0$ there is at most one edge connecting $i$ to $\pm j$  so the maximum number of such edges is $n-1$. The same argument holds for $-i$. Finally there are the two edges connecting $\pm i$ to $0$. The total is $2\binom{n-1}{2}+2+2(n-1)+2=2\binom{n}{2}+4$. Thus $|S|\le 2\binom{n}{2}+4$. We need only to check that $|S|\ne 2\binom{n}{2}+4$.

If  $|S|=2\binom{n}{2}+4$, then, for each $h\in\{\pm 1,\dots,\pm n\}\backslash \{\pm i\}$, the vertex $i$ must be connected to exactly one between $h,-h$. 
If there is another pair  $\pm j$ connected to $0$, then $i$ must be connected to either $j$ or $-j$ and this is impossible, for, otherwise, there would be a triangle in the graph of $S$. 
It follows that at most one pair $\pm i$ is connected to $0$.
Let  $\pm i_1, \dots, \pm i_{n-1}$ be the other nonzero vertices. 
As observed above, $i$ must be connected to one element of each of the pairs $\pm i_1, \dots \pm i_{n-1}$, so assume $i$ is connected to $i_1,\dots,i_{n-1}$. Then there is no connection between the vertices $i_1,\dots,i_{n-1}$, for, otherwise, there would be a triangle. The same argument with $-i$ shows that there are no edges between $-i_1,\dots,-i_{n-1}$. The only edges between $\pm i_1, \dots \pm i_{n-1}$ occur between vertices in $\{i_1,\dots,i_{n-1}\}$ and vertices in $\{-i_1,\dots,-i_{n-1}\}$. The edges $\{r,-r\}$ between opposite vertices do not occur, so there are at most $2\binom{n-1}{2}$ edges, short of the $2\binom{n-1}{2}+2$ needed.
We deduce that 
the only possibility for having at least $2\binom{n}{2}+4$ edges can occur if no vertex is connected to $0$.

Assume now that $|S|\ge 2\binom{n}{2}+4$ and that there are no vertices connected to $0$. If we drop a vertex $\pm i$ from the graph, then, by the induction hypothesis,  we know that we can have at most $2\binom{n-1}{2}+2$ edges. Thus $\pm i$ have to be connected with at least $2n$ edges.  By symmetry, both $i$ and $-i$ are connected by at least $n$ edges, i.e. any vertex has degree at least $n$. This implies that each vertex has degree exactly $n$. In fact, if $i_1, \dots, i_{n+1}$ are connected to $i$, then $i_1$ can not be connected to $i_j$ for each $j$, because, otherwise, there would be triangles. So $i_1$ is connected only to at most $n-1$ edges. This is not possible because $i_1$ has degree at least $n$.  
Let $i_1,\dots,i_n$ be the vertices connected to $i$. As already observed they cannot be connected to each other, so they are connected to vertices in $\{\pm 1, \dots,\pm n\}\backslash \{i_1,\dots,i_n\}$. It follows that there are at least $n^2$ edges in the graph. If we drop $\pm i_1$ from the graph, we have at least $n^2-2n$ edges. But $
n^2-2n=2\binom{n-1}{2}+2+(n-4)$, so, since $n>4$, 
$$n^2-2n>2\binom{n-1}{2}+2
$$
and this is impossible due to the induction hypothesis.
\end{proof}
\section{Type $D$}
This case is treated along the lines of type $B$. 

\subsection{Encoding the roots.} Recall (cf. Planche IV of \cite{B}) that the root system can be described as
$$
\D=\{\pm(\e_i\pm\e_j)\mid 1\le i< j\le n\}.
$$
A set of positive roots is
$$
\Dp=\{\e_i\pm\e_j\mid 1\le i< j\le n\}.
$$

If $S$ is a set of roots, we define the graph $\G(S)$ by looking at $S$ as a subset of a root system of type $B$. Note that, since there are not short roots in $\D$, $\G(S)$ coincides with $\G_C(S)$ and in $\G_C(S)$ there are no edges $\{r,-r\}$ connecting opposite vertices.

\subsection{Proof of Theorem \ref{fd}}
Define
\begin{equation}\label{D}
P(D_n)=\{\e_i+\e_j\mid1\leq i<j\leq n\}.\end{equation}
If $i<j$ and $\g=\e_i+\e_j$, set 
$$L(\gamma)=\{ \e_i+\e_j\}
\cup\{\e_i-\e_h\mid i<h\leq n,\,h\ne j\}\cup\{\e_j-\e_h\mid 1\leq j<h\leq n\}.$$
One can check, as in Section \ref{Type B}, that $L(\g)$ is biclosed and that the set 
 $$Y(D_n)=\{w_\gamma\mid N(w_\gamma)=L(\gamma), \gamma \in P(D_n)\}
$$
is  minimal complete, of  cardinality $\binom{n}{2}$, so we have proved:
\begin{prop}\label{geDn} If $n\ge 4$, $MC(D_n)\geq\binom{n}{2}$.
\end{prop}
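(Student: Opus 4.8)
The plan is to proceed exactly as for type $B$ (Proposition \ref{Paolo}): produce the explicit family $Y(D_n)=\{w_\gamma\mid \gamma\in P(D_n)\}$ and show it is minimal inversion complete of cardinality $|P(D_n)|=\binom{n}{2}$; the inequality $MC(D_n)\ge\binom{n}{2}$ is then immediate from the definition. The one thing to establish before we may even speak of $w_\gamma$ is that each $L(\gamma)$ is biclosed, so that by the characterization of the sets $N(w)$ it equals $N(w_\gamma)$ for a (necessarily unique) $w_\gamma\in W$.

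First I would list the decompositions of a positive root of $D_n$ into two positive roots; these are obtained from the type $B$ list \eqref{a}--\eqref{dd} by discarding the short roots, so that only $\e_i+\e_j=(\e_i-\e_h)+(\e_h+\e_j)$ (for $i<h$, $h\ne j$), $\e_i+\e_j=(\e_i+\e_h)+(\e_j-\e_h)$ (for $j<h$), and $\e_i-\e_j=(\e_i-\e_h)+(\e_h-\e_j)$ (for $i<h<j$) survive. Closedness of $L(\gamma)$ is then immediate: a coordinate check shows that no two of the listed members of $L(\gamma)$, namely the single sum root $\e_i+\e_j$ together with the differences $\e_i-\e_h$ ($h\ne j$) and $\e_j-\e_h$, ever add up to a root, so the closedness condition holds vacuously.

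Coclosedness is where the work and the only genuine subtlety lie, and I would check it via the reformulation \eqref{p2}, testing every root of $L(\gamma)$ against its decompositions. The decompositions of $\gamma$ itself and of the roots $\e_j-\e_h$ are routine, each producing a summand visibly in $L(\gamma)$. The delicate case, which I expect to be the main obstacle, is a difference $\e_i-\e_h\in L(\gamma)$ split as $(\e_i-\e_m)+(\e_m-\e_h)$ when the intermediate index is $m=j$: then the first summand is exactly $\e_i-\e_j$, the root deliberately \emph{omitted} from $L(\gamma)$, so it is not in $L(\gamma)$. Coclosedness survives only because $m=j<h$ forces the second summand $\e_j-\e_h$ into $L(\gamma)$; confirming that this hand-off always occurs is the heart of the biclosedness verification.

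Finally I would dispatch minimality and completeness. Minimality is automatic: $L(\gamma')$ contains exactly one sum root, namely $\gamma'$, so each $\gamma\in P(D_n)$ is an essential root occurring in $N(w_\gamma)$ alone, and no $w_\gamma$ can be removed. For completeness, every sum root is its own $\gamma$, while an arbitrary difference $\e_a-\e_b$ ($a<b$) is captured as $\e_j-\e_h$ inside $L(\e_i+\e_a)$ with $i<a$ when $a\ge 2$, and as $\e_i-\e_h$ inside $L(\e_1+\e_j)$ with $j\ne b$ when $a=1$; the latter requires an index $j\in\{2,\dots,n\}\setminus\{b\}$, which exists as soon as $n\ge 3$ and in particular under the hypothesis $n\ge 4$ (the range in which $\D$ is genuinely of type $D$ and the count $\binom{n}{2}$ is sharp by Theorem \ref{fd}). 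Since $|Y(D_n)|=|P(D_n)|=\binom{n}{2}$, the bound follows.
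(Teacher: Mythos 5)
Your proposal is correct and follows the paper's own route: the paper defines the same $P(D_n)$ and $L(\gamma)$ and simply asserts that the biclosedness, minimality, and completeness checks go through ``as in Section \ref{Type B}'', which is exactly what you carry out in detail (including the one genuinely delicate coclosedness case $(\e_i-\e_j)+(\e_j-\e_h)$, which you resolve correctly). Nothing further is needed.
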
 

\subsubsection{Case $D_4$} We want to prove that $MC(D_4)=6$. By Proposition \ref{geDn}, we need only to check that $MC(D_4)\le 6$.
Let $Y$ be a minimal inversion complete set for $D_4$, $S^+$ an essential set for $Y$,  and  $S=S^+\cup -S^+$. We need to check that $|S|\le 12$. It clearly suffices to show that, if $S$ satisfies the conditions of Proposition \ref{conditions}, then $|S|\le 12$. 

If we drop a pair $\pm i$ of vertices from $\G(S)$, then the resulting subgraph cannot have triangles, so there are at most $9$ edges. Since the number of roots must be even, the subgraph has at most $8$ edges. It follows that at least $6$ edges connect to $\pm i$. By the symmetry of the graph we see that $i$ has degree at least $3$.  Assume first that there is no vertex $r$ such that $i$ is connected to both $r$ and $-r$. Let $j,h,k$ be the vertices connected to $i$ so $-i$ is connected to $-j,-h,-k$. There cannot be any edge between the vertices  $j,h,k$, for, otherwise, there would be triangles. For the same reason there cannot be edges between the vertices $-j,-h,-k$. It follows that there are at most $6$ edges in the $\pm j, \pm h,\pm k$ subgraph. This implies that there are at most $12$ edges in $\G(S)$. 

We can therefore assume that $i$ is connected to a pair $\pm j$. By symmetry, $-i$ is connected to $\pm j$ as well. Since $i$ has degree at least $3$, there must be another vertex $h$ connected to $i$, so $-i$ is connected to $-h$. For the same reason $j$ is connected to another $k$. Note that $k\ne h$ for, otherwise, we would have triangles. Thus $\G(S)$ contains the subgraph
$$ 
\begin{tikzpicture}[scale=0.2,dot/.style={circle,fill,inner sep=0,minimum size=3pt},left/.style={xshift=-10pt},up/.style={yshift=7pt},down/.style={yshift=-7pt},right/.style={xshift=10pt}]
\node[dot] (1) at (-2,6){};
\node[dot] (2) at (2,6){};
\node[dot] (3) at (6,2){};
\node[dot] (4) at (6,-2){};
\node[dot] (5) at (2,-6){};
\node[dot] (6) at (-2,-6){};
\node[dot] (7) at (-6,-2){};
\node[dot] (8) at (-6,2){};
\node[up] at (1.north){$i$};
\node[up] at (2.north){$j$};
\node[right] at (3.east){$k$};
\node[right] at (4.east){$-h$};
\node[down] at (5.south){$-i$};
\node[down] at (6.south){$-j$};
\node[left] at (7.west){$-k$};
\node[left] at (8.west){$h$};
\draw[-] (1) -- (2);
\draw[-] (2) -- (3);
\draw[-] (4) -- (5);
\draw[-] (5) -- (6);
\draw[-] (6) -- (7);
\draw[-] (1) -- (8);
\draw[-] (1) -- (6);
\draw[-] (2) -- (5);
\end{tikzpicture}
$$
The vertex $h$ must also have degree $3$ and there is a pair $\pm r$ connected to $h$. The possibilities for $r$ are $r=i$ or $r=k$ because both $j$ and $-j$ cannot be connected to $h$. The two resulting subgraphs are
$$ 
\begin{tikzpicture}[scale=0.2,dot/.style={circle,fill,inner sep=0,minimum size=3pt},left/.style={xshift=-10pt},up/.style={yshift=7pt},down/.style={yshift=-7pt},right/.style={xshift=10pt}]
\node[dot] (1) at (-2,6){};
\node[dot] (2) at (2,6){};
\node[dot] (3) at (6,2){};
\node[dot] (4) at (6,-2){};
\node[dot] (5) at (2,-6){};
\node[dot] (6) at (-2,-6){};
\node[dot] (7) at (-6,-2){};
\node[dot] (8) at (-6,2){};
\node[up] at (1.north){$i$};
\node[up] at (2.north){$j$};
\node[right] at (3.east){$k$};
\node[right] at (4.east){$-h$};
\node[down] at (5.south){$-i$};
\node[down] at (6.south){$-j$};
\node[left] at (7.west){$-k$};
\node[left] at (8.west){$h$};
\draw[-] (1) -- (2);
\draw[-] (2) -- (3);
\draw[-] (4) -- (5);
\draw[-] (5) -- node[yshift=-35pt]{$\G'$}(6);
\draw[-] (6) -- (7);
\draw[-] (1) -- (8);
\draw[-] (1) -- (6);
\draw[-] (2) -- (5);
\draw[-] (5) -- (8);
\draw[-] (4) -- (1);
\begin{scope}[xshift=25cm]
\node[dot] (1) at (-2,6){};
\node[dot] (2) at (2,6){};
\node[dot] (3) at (6,2){};
\node[dot] (4) at (6,-2){};
\node[dot] (5) at (2,-6){};
\node[dot] (6) at (-2,-6){};
\node[dot] (7) at (-6,-2){};
\node[dot] (8) at (-6,2){};
\node[up] at (1.north){$i$};
\node[up] at (2.north){$j$};
\node[right] at (3.east){$k$};
\node[right] at (4.east){$-h$};
\node[down] at (5.south){$-i$};
\node[down] at (6.south){$-j$};
\node[left] at (7.west){$-k$};
\node[left] at (8.west){$h$};
\draw[-] (1) -- (2);
\draw[-] (2) -- (3);
\draw[-] (4) -- (5);
\draw[-] (5) --node[yshift=-35pt]{$\G''$} (6);
\draw[-] (6) -- (7);
\draw[-] (1) -- (8);
\draw[-] (1) -- (6);
\draw[-] (3) -- (4);
\draw[-] (2) -- (5);
\draw[-] (7) -- (8);
\draw[-] (3) -- (8);
\draw[-] (7) -- (4);
  \end{scope}
\end{tikzpicture}
$$
In $\G'$ we have to add two edges that connect $k$ to a pair $\pm r$. The possibilities are $r=h$ or $r=i$, thus obtaining the two subgraphs
$$ 
\begin{tikzpicture}[scale=0.4,dot/.style={circle,fill,inner sep=0,minimum size=3pt},left/.style={xshift=-10pt},up/.style={yshift=7pt},down/.style={yshift=-7pt},right/.style={xshift=10pt}]
\node[dot] (1) at (-2,6){};
\node[dot] (2) at (2,6){};
\node[dot] (3) at (6,2){};
\node[dot] (4) at (6,-2){};
\node[dot] (5) at (2,-6){};
\node[dot] (6) at (-2,-6){};
\node[dot] (7) at (-6,-2){};
\node[dot] (8) at (-6,2){};
\node[up] at (1.north){$i$};
\node[up] at (2.north){$j$};
\node[right] at (3.east){$k$};
\node[right] at (4.east){$-h$};
\node[down] at (5.south){$-i$};
\node[down] at (6.south){$-j$};
\node[left] at (7.west){$-k$};
\node[left] at (8.west){$h$};
\draw[-] (1) -- (2);
\draw[-] (2) -- (3);
\draw[-] (4) -- (5);
\draw[-] (5) -- node[yshift=-35pt]{$\G'_1$}(6);
\draw[-] (6) -- (7);
\draw[-] (1) -- (8);
\draw[-] (1) -- (6);
\draw[-] (2) -- (5);
\draw[-] (5) -- (8);
\draw[-] (4) -- (1);
\draw[-] (7) -- (2);
\draw[-] (3) -- (6);
\begin{scope}[xshift=18cm]
\node[dot] (1) at (-2,6){};
\node[dot] (2) at (2,6){};
\node[dot] (3) at (6,2){};
\node[dot] (4) at (6,-2){};
\node[dot] (5) at (2,-6){};
\node[dot] (6) at (-2,-6){};
\node[dot] (7) at (-6,-2){};
\node[dot] (8) at (-6,2){};
\node[up] at (1.north){$i$};
\node[up] at (2.north){$j$};
\node[right] at (3.east){$k$};
\node[right] at (4.east){$-h$};
\node[down] at (5.south){$-i$};
\node[down] at (6.south){$-j$};
\node[left] at (7.west){$-k$};
\node[left] at (8.west){$h$};
\draw[-] (1) -- (2);
\draw[-] (2) -- (3);
\draw[-] (4) -- (5);
\draw[-] (5) --node[yshift=-35pt]{$\G'_2$} (6);
\draw[-] (6) -- (7);
\draw[-] (1) -- (8);
\draw[-] (1) -- (6);
\draw[-] (3) -- (4);
\draw[-] (2) -- (5);
\draw[-] (7) -- (8);
\draw[-] (3) -- (8);
\draw[-] (7) -- (4);
\draw[-] (1) -- (4);
\draw[-] (8) -- (5);
  \end{scope}
\end{tikzpicture}
$$
In $\G'_1$ an edge has to be added so that $k$ has degree at least $3$. The only possible choices are the edges $\{k,\pm h\}$. By possibly switching $h$ with $-h$, we can assume that $k$ is connected to $h$. The graph then becomes
$$ 
\begin{tikzpicture}[scale=0.4,dot/.style={circle,fill,inner sep=0,minimum size=3pt},left/.style={xshift=-10pt},up/.style={yshift=7pt},down/.style={yshift=-7pt},right/.style={xshift=10pt}]
\node[dot] (1) at (-2,6){};
\node[dot] (2) at (2,6){};
\node[dot] (3) at (6,2){};
\node[dot] (4) at (6,-2){};
\node[dot] (5) at (2,-6){};
\node[dot] (6) at (-2,-6){};
\node[dot] (7) at (-6,-2){};
\node[dot] (8) at (-6,2){};
\node[up] at (1.north){$i$};
\node[up] at (2.north){$j$};
\node[right] at (3.east){$k$};
\node[right] at (4.east){$-h$};
\node[down] at (5.south){$-i$};
\node[down] at (6.south){$-j$};
\node[left] at (7.west){$-k$};
\node[left] at (8.west){$h$};
\draw[-] (1) -- (2);
\draw[-] (2) -- (3);
\draw[-] (4) -- (5);
\draw[-] (5) -- (6);
\draw[-] (6) -- (7);
\draw[-] (1) -- (8);
\draw[-] (1) -- (6);
\draw[-] (2) -- (5);
\draw[-] (5) -- (8);
\draw[-] (4) -- (1);
\draw[-] (7) -- (2);
\draw[-] (3) -- (6);
\draw[-] (3) -- (4);
\draw[-] (7) -- (8);
\end{tikzpicture}
$$
which is, up to renaming the vertices, the subgraph $\G'_2$. The subgraph $\G''$ has $12$ edges. If $|S|>12$, there must be a vertex of degree $4$. Since in $\G''$ all vertices are equivalent, we can assume that $i$ has degree $4$. The only possibility is that $i$ is connected to $-h$ so also $-i$ is connected to $h$ and we obtain again $\G'_2$. Thus $\G'_2$ is a subgraph of $\G(S)$.  

We now give an orientation to the edges of the rectangle connecting the four vertices of degree $4$. The graph $\G(S)$ is symmetric with respect to the lines  parallel to the edges of this rectangle, so using this symmetry we can assume that the orientation in $G(S^+)$ for the edges $\{i,\pm h\}$ in $\G(S)$ is $i\to h$ and $i\to -h$. The digraph of $S^+$ thus contains
$$
\begin{tikzpicture}[>=angle 90,scale=0.4,dot/.style={circle,fill,inner sep=0,minimum size=3pt},left/.style={xshift=-10pt},up/.style={yshift=7pt},down/.style={yshift=-7pt},right/.style={xshift=10pt}]
\node[dot] (1) at (-2,6){};
\node[dot] (2) at (2,6){};
\node[dot] (3) at (6,2){};
\node[dot] (4) at (6,-2){};
\node[dot] (5) at (2,-6){};
\node[dot] (6) at (-2,-6){};
\node[dot] (7) at (-6,-2){};
\node[dot] (8) at (-6,2){};
\node[up] at (1.north){$i$};
\node[up] at (2.north){$j$};
\node[right] at (3.east){$k$};
\node[right] at (4.east){$-h$};
\node[down] at (5.south){$-i$};
\node[down] at (6.south){$-j$};
\node[left] at (7.west){$-k$};
\node[left] at (8.west){$h$};
\draw[-] (1) -- (2);
\draw[-] (2) -- (3);
\draw[-] (4) -- (5);
\draw[->] (4) -- (4,-4);
\draw[-] (5) -- (6);
\draw[-] (6) -- (7);
\draw[-] (1) -- (8);
\draw[->] (1) -- (-4,4);
\draw[-] (1) -- (6);
\draw[-] (3) -- (4);
\draw[-] (2) -- (5);
\draw[-] (7) -- (8);
\draw[-] (3) -- (8);
\draw[-] (7) -- (4);
\draw[-] (1) -- (4);
\draw[->] (1) -- (3,1);
\draw[-] (8) -- (5);
\draw[->] (8) -- (-1,-3);
\end{tikzpicture}
$$
Let us choose the orientation for the edge $\{h,-k\}$. If the orientation is $h\to -k$, then we cannot have $-h\to -k$ because of condition \eqref{nr1} (we have two paths $i\to h \to -k$ and $i\to -h \to -k$ without common roots). We cannot have $-k\to -h$ either because of condition \eqref{nr1} again (we have two paths $i\to h \to -k\to -h$ and $i\to -h$ without common roots). So we must have the orientation $-k\to h$. But again, we cannot have $-k\to -h$ because of condition \eqref{nr1} (we have two paths $-k\to h\to -i$ and $-k\to -h \to -i$ without common roots) and we cannot have $-h\to -k$ either because of condition \eqref{nr1} (we have two paths $i\to -h \to -k\to h$ and $i\to h$ without common roots). It follows that $|S|<14$ and we are done.

\subsubsection{General case} 
 \begin{theorem} If  $n\ge 4$ then
$MC(D_n)=\binom{n}{2}$.
\end{theorem}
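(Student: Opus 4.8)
The plan is to establish the upper bound $MC(D_n)\le\binom{n}{2}$, the reverse inequality being Proposition \ref{geDn}. As in type $B$, I would argue entirely at the level of the associated graph: if $Y$ is minimal inversion complete, $S^+$ an essential set for $Y$, and $S=S^+\cup -S^+$, it suffices to show that any symmetric $S$ satisfying the conditions of Proposition \ref{conditions} has $|S|\le 2\binom{n}{2}=n(n-1)$. Recall that in type $D$ the graph $\G(S)=\G_C(S)$ lives on the $2n$ vertices $\{\pm 1,\dots,\pm n\}$, is invariant under $i\mapsto -i$, has no edge joining opposite vertices $\{r,-r\}$ (since $2\e_i$ is not a root), and by \eqref{nr2} is triangle-free.

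I would prove by induction on $n\ge 4$ that such a graph has at most $n(n-1)$ edges. The base case $n=4$ is the $D_4$ computation already carried out. For the inductive step assume $n\ge 5$ and, for contradiction, that $|S|\ge n(n-1)+2$. Dropping a pair of vertices $\pm i$ yields a symmetric configuration on the $2(n-1)$ remaining vertices which still satisfies the conditions of Proposition \ref{conditions} (they are inherited by this restriction) and corresponds to a root system of type $D_{n-1}$ with $n-1\ge 4$; the induction hypothesis bounds its number of edges by $(n-1)(n-2)$. Hence the number of edges incident to $\pm i$ is at least $n(n-1)+2-(n-1)(n-2)=2n$, and since $\deg(i)=\deg(-i)$ by symmetry while there is no edge between $i$ and $-i$, every vertex has degree at least $n$.

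Next I would upgrade this to: every vertex has degree exactly $n$. Fix a vertex, say $1$, with neighbour set $A=N(1)$, $|A|=d\ge n$. Since $\G(S)$ is triangle-free, $A$ is an independent set and, for each $a\in A$, no neighbour of $a$ can lie in $A$ (a common neighbour of $1$ and $a$ would close a triangle); thus $N(a)\subseteq V(S)\setminus A$, a set of $2n-d$ vertices, giving $n\le\deg(a)\le 2n-d$ and so $d\le n$. Therefore $d=n$, and as $1$ was arbitrary all degrees equal $n$, whence $|S|=\tfrac12\cdot 2n\cdot n=n^2$. Dropping one further pair $\pm a_1$ removes exactly $2n$ edges and leaves a valid type $D_{n-1}$ configuration with $n^2-2n=n(n-2)$ edges; but the induction hypothesis forces this number to be at most $(n-1)(n-2)<n(n-2)$, a contradiction. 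This proves $|S|\le n(n-1)$.

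I would expect the only genuinely delicate point to be already behind us, namely the base case $D_4$, which is where condition \eqref{nr1} of Proposition \ref{conditions} is essential; the inductive engine above uses only triangle-freeness together with the antipodal symmetry and the absence of opposite-vertex edges. A secondary point to handle with care is that the induction must start at $D_4$ rather than $D_3$, since $D_3=A_3$ has $MC(A_3)=\lfloor 16/4\rfloor=4\ne\binom{3}{2}$; this is exactly why $n-1\ge 4$, i.e.\ $n\ge 5$, is required for the two reduction steps to invoke the $D_{n-1}$ bound.
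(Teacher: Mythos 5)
Your proposal is correct and follows essentially the same route as the paper: induction on $n$ with base case $D_4$, deletion of an antipodal pair to force minimum degree $n$, triangle-freeness to cap every degree at exactly $n$, and a second deletion to contradict the inductive bound via $n(n-2)>(n-1)(n-2)$. The only cosmetic difference is that you derive the equality $\deg(v)=n$ by a direct count of the complement of the (independent) neighbourhood, where the paper argues by contradiction from $n+1$ neighbours; the two arguments are interchangeable.
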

\begin{proof}
By Proposition \ref{geDn}, we need only to prove that $MC(D_n)\le \binom{n}{2}$. This has been proved for $n=4$ above.
Note that  we have actually proved  that, if $S^+$ satisfies the conditions of Proposition \ref{conditions}, then $|S^+|\le \binom{4}{2}$.

If $n\ge 4$, we will prove by induction on $n$ that, if a set $S^+\subset \Dp$ satisfies the conditions of Proposition \ref{conditions}, then $|S^+| \le \binom{n}{2}$. 
The base of the induction is the case $D_4$. Assume now $n>4$.

Set $S=S^+\cup -S^+$. Assume that $|S|\ge 2\binom{n}{2}+2$. If we drop a vertex $\pm i$ from the graph, then, by the induction hypothesis,  we know that we can have at most $2\binom{n-1}{2}$ edges. Thus $\pm i$ have to be connected with at least $2n$ edges.  By symmetry, both $i$ and $-i$ are connected by at least $n$ edges, i.e. any vertex has degree at least $n$. This implies that each vertex has degree exactly $n$. In fact, if $i_1, \dots, i_{n+1}$ are connected to $i$, then $i_1$ can not be connected to $i_j$ for each $j$, because, otherwise, there would be triangles. So $i_1$ is connected only to at most $n-1$ edges. This is not possible because $i_1$ has degree at least $n$.  

Let $i_1,\dots,i_n$ be the vertices connected to $i$. As already observed they cannot be connected to each other, so they are connected to vertices in
$$\{\pm 1,\dots,\pm n\}\backslash \{i_1,\dots,i_n\}.
$$
It follows that there are at least $n^2$ edges in the graph. If we drop $\pm i_1$ from the graph we have at least $n^2-2n$ edges. But $
n^2-2n=2\binom{n-1}{2}+(n-2)$, so, since $n>4$, 
$$n^2-2n>2\binom{n-1}{2}.
$$
and this is impossible due to the induction hypothesis.
\end{proof}
\section{Lower bounds for exceptional types}\label{exce}
We want to prove the inequalities \eqref{geq}. It suffices provide in each case a minimal complete set $Y$ of the required cardinality.
In  type $F_4$, if  we display the Dynkin diagram of $F_4$ as 
$
\begin{tikzpicture}[baseline=-3pt,double distance=2pt,>=stealth] 
\matrix [matrix of math nodes,row sep=0cm,column sep=1cm]
{|(0)|\circ&|(1)|\circ&|(2)|\circ&|(3)|\circ\\
};
\node [below] at (0) {$\a_1$};
\node [below] at (1) {$\a_2$};
\node (alfa) [below] at (2) {$\a_{3}$};
\node [below] at (3) {$\a_{4}$};
\begin{scope}[every node/.style={midway,auto}]
\draw (2) -- (3);
\draw (0) -- (1);
\draw[double,<-] (1) -- (2);
\end{scope}
\end{tikzpicture},
$
 then an essential set for $Y$ is given in the first column of the following table, while
$Y$ itself appears in the second column:
\begin{equation}\label{radF4}
\begin{tabular}{l|l}
$\a(w)$&$w$\\
\hline
$2\a_1+4\a_2+3\a_3+2\a_4$&$s_4 s_3 s_2 s_1 s_3 s_2 s_4 s_3$\\
$2\a_1+4\a_2+3\a_3+\a_4$&$s_3s_2s_1s_3s_2s_4s_3$\\
$2\a_1+4\a_2+2\a_3+\a_4$&$s_2s_1s_3s_2s_4s_3$\\
$2\a_1+3\a_2+2\a_3+\a_4$&$s_1s_2s_3s_2s_4s_3s_2s_1$\\
$\a_1+3\a_2+2\a_3+\a_4$&$s_2s_3s_2s_4s_3s_2s_1$\\
$\a_1+2\a_2+2\a_3+\a_4$&$s_3s_2s_4s_3s_2s_1$
\end{tabular}.
\end{equation}
\begin{rem}\label{f4}  In \cite{conti} we have performed calculations proving that $MC(F_4)=6$. Indeed,  we have proved that 
no subset of $\Dp$of cardinality bigger or equal to 7 can be  essential  for a minimal complete set. More precisely, we have first singled out the subsets
$\{\beta_1,\ldots,\beta_i\}\subset\Dp$ with $i\ge 7$ such that there exist $(w_1,\dots,w_i)\in W^i$ with
 $\beta_i\in N(w_k),\,\beta_j\notin N(w_k)$ for $j\ne k$. Then we have verified that no  choice of $(w_1,\dots,w_i)$ arising from the previous step affords a complete set.
\end{rem}
\vskip10pt
Now we have to deal with the $E$ series. Note that  $MC(T)$ is the maximal dimension of an abelian subalgebra of a simple Lie algebra of type $X=A,B,D$ (although  the same does not happen in types $C, F_4$, and $G_2$). Kostant's theory  implies that this number is also the maximal dimension of  an abelian ideal of a Borel subalgebra (see \cite{K} and also \cite{Pan}).
The previous remark   motivated the choice of $P(A_{n-1})$, $P(B_n)$, and $P(D_n)$: the sum of root subspaces corresponding to the roots displayed in  \eqref{A}, \eqref{B}, \eqref{D} is  indeed an abelian ideal of maximal dimension in each type. \par
Also for types $E$ we can exhibit a minimal complete set
choosing as essential set  $P(E_n),\,n=6,7,8$, the roots  indexing an abelian ideal of maximal dimension: we omit the details, referring to 
\cite{S} or \cite{CPA},  where these ideals are explicitly described.
To build up  $N(\a)$ for each $\a\in P(E_n)$, we observe that in type $E_6$ and $E_7$ for any decomposition $\a=\beta_i+\gamma_i$ as a sum of two positive roots, one and only one component, say $\beta_i$, belongs to $P(E_n)$. Then it turns out that $\{\a\}\cup\cup_i\{\gamma_i\}$ is biclosed, hence we can take it for $N(\a)$. More explicitly, in type $E_6$ we have
\begin{align*}Y(E_6)=\{ &s_2 s_4 s_3 s_5 s_4 s_2 s_6 s_5 s_4 s_3 s_1, s_4 s_3 s_5 s_4 s_2 s_6 s_5 s_4 s_3 s_1, s_3 s_5 s_4 s_2 s_6 s_5 s_4 s_3 s_1,\\&s_5 s_4 s_2 s_6 s_5 s_4 s_3 s_1, s_3 s_4 s_2 s_6 s_5 s_4 s_3 s_1, s_4 s_2 s_6 s_5 s_4 s_3 s_1, s_2 s_6 s_5 s_4 s_3 s_1,\\&s_3 s_4 s_2 s_5 s_4 s_3 s_1, 	s_4 s_2 s_5 s_4 s_3 s_1, s_2 s_5 s_4 s_3 s_1, s_2 s_4 s_3 s_1, s_6 s_5 s_4 s_3 s_1, s_5 s_4 s_3 s_1,\\& s_4 s_3 s_1, s_3 s_1, s_1\}\end{align*}
whereas in type $E_7$ we have
\begin{align*}Y(E_7)=\{ 
&s_1 s_3 s_4 s_2 s_5 s_4 s_3 s_1 s_6 s_5 s_4 s_2 s_3 s_4 s_5 s_6 s_7,
s_3 s_4 s_2 s_5 s_4 s_3 s_1 s_6 s_5 s_4 s_2 s_3 s_4 s_5 s_6 s_7,
\\&s_4 s_2 s_5 s_4 s_3 s_1 s_6 s_5 s_4 s_2 s_3 s_4 s_5 s_6 s_7,
s_2 s_5 s_4 s_3 s_1 s_6 s_5 s_4 s_2 s_3 s_4 s_5 s_6 s_7,
\\&s_2 s_4 s_3 s_1 s_6 s_5 s_4 s_2 s_3 s_4 s_5 s_6 s_7,
s_5 s_4 s_3 s_1 s_6 s_5 s_4 s_2 s_3 s_4 s_5 s_6 s_7,
\\&s_4 s_3 s_1 s_6 s_5 s_4 s_2 s_3 s_4 s_5 s_6 s_7,
s_3 s_1 s_6 s_5 s_4 s_2 s_3 s_4 s_5 s_6 s_7,
\\&s_1 s_6 s_5 s_4 s_2 s_3 s_4 s_5 s_6 s_7,
s_2 s_4 s_3 s_1 s_5 s_4 s_2 s_3 s_4 s_5 s_6 s_7,
\\&s_4 s_3 s_1 s_5 s_4 s_2 s_3 s_4 s_5 s_6 s_7,
s_3 s_1 s_5 s_4 s_2 s_3 s_4 s_5 s_6 s_7,
\\&s_1 s_5 s_4 s_2 s_3 s_4 s_5 s_6 s_7,
s_3 s_1 s_4 s_2 s_3 s_4 s_5 s_6 s_7,
\\&s_1 s_4 s_2 s_3 s_4 s_5 s_6 s_7,
s_1 s_2 s_3 s_4 s_5 s_6 s_7,
s_1 s_3 s_4 s_5 s_6 s_7,
s_6 s_5 s_4 s_2 s_3 s_4 s_5 s_6 s_7,
\\&s_5 s_4 s_2 s_3 s_4 s_5 s_6 s_7,
s_4 s_2 s_3 s_4 s_5 s_6 s_7,
s_2 s_3 s_4 s_5 s_6 s_7,
s_3 s_4 s_5 s_6 s_7,
\\&s_2 s_4 s_5 s_6 s_7,
s_4 s_5 s_6 s_7,
s_5 s_6 s_7,
s_6 s_7,
s_7
\}\end{align*}
In type $E_8$ it may happen that both components do not belong to $P(E_8)$, but it is again possible to choose the components in such a way to have a biclosed set. 
The outcome is as follows
{\scriptsize
\begin{align*}
Y(E_8)=\{ 
&s_8 s_7 s_6 s_5 s_4 s_2 s_3 s_1 s_4 s_3 s_5 s_4 s_2 s_6 s_5 s_4 s_3 s_7 s_6 s_5 s_4 s_2 s_8 s_7 s_6 s_5 s_4 s_3 s_1,
\\&s_7 s_6 s_5 s_4 s_2 s_3 s_1 s_4 s_3 s_5 s_4 s_2 s_6 s_5 s_4 s_3 s_7 s_6 s_5 s_4 s_2 s_8 s_7 s_6 s_5 s_4 s_3 s_1,
\\&s_6 s_5 s_4 s_2 s_3 s_1 s_4 s_3 s_5 s_4 s_2 s_6 s_5 s_4 s_3 s_7 s_6 s_5 s_4 s_2 s_8 s_7 s_6 s_5 s_4 s_3 s_1,
\\&s_5 s_4 s_2 s_3 s_1 s_4 s_3 s_5 s_4 s_2 s_6 s_5 s_4 s_3 s_7 s_6 s_5 s_4 s_2 s_8 s_7 s_6 s_5 s_4 s_3 s_1,
\\&s_4 s_2 s_3 s_1 s_4 s_3 s_5 s_4 s_2 s_6 s_5 s_4 s_3 s_7 s_6 s_5 s_4 s_2 s_8 s_7 s_6 s_5 s_4 s_3 s_1,
\\&s_2 s_3 s_1 s_4 s_3 s_5 s_4 s_2 s_6 s_5 s_4 s_3 s_7 s_6 s_5 s_4 s_2 s_8 s_7 s_6 s_5 s_4 s_3 s_1,
\\&s_1 s_2 s_4 s_3 s_5 s_4 s_2 s_6 s_5 s_4 s_3 s_7 s_6 s_5 s_4 s_2 s_8 s_7 s_6 s_5 s_4 s_3 s_1,
\\&s_3 s_1 s_4 s_3 s_5 s_4 s_2 s_6 s_5 s_4 s_3 s_1 s_7 s_6 s_5 s_4 s_2 s_3 s_8 s_7 s_6 s_5 s_4,
\\&s_1 s_4 s_3 s_5 s_4 s_2 s_6 s_5 s_4 s_3 s_1 s_7 s_6 s_5 s_4 s_2 s_3 s_8 s_7 s_6 s_5 s_4,
\\&s_1 s_3 s_5 s_4 s_2 s_6 s_5 s_4 s_3 s_1 s_7 s_6 s_5 s_4 s_2 s_3 s_8 s_7 s_6 s_5 s_4,
\\&s_1 s_3 s_4 s_2 s_6 s_5 s_4 s_3 s_1 s_7 s_6 s_5 s_4 s_3 s_8 s_7 s_6 s_5 s_4 s_2,
s_1 s_3 s_4 s_2 s_5 s_4 s_3 s_1 s_7 s_6 s_5 s_4 s_2 s_3 s_8 s_7 s_6 s_5 s_4,
\\&s_1 s_3 s_4 s_2 s_5 s_4 s_3 s_1 s_6 s_5 s_4 s_2 s_3 s_4 s_8 s_7 s_6 s_5,
s_2 s_4 s_3 s_5 s_4 s_2 s_6 s_5 s_4 s_3 s_7 s_6 s_5 s_4 s_2 s_8 s_7 s_6 s_5 s_4 s_3 s_1,
\\&s_4 s_3 s_5 s_4 s_2 s_6 s_5 s_4 s_3 s_1 s_7 s_6 s_5 s_4 s_2 s_8 s_7 s_6 s_5 s_4 s_3,
s_3 s_5 s_4 s_2 s_6 s_5 s_4 s_3 s_1 s_7 s_6 s_5 s_4 s_2 s_3 s_8 s_7 s_6 s_5 s_4,
\\&s_3 s_4 s_2 s_6 s_5 s_4 s_3 s_1 s_7 s_6 s_5 s_4 s_2 s_3 s_8 s_7 s_6 s_5 s_4,
s_3 s_4 s_2 s_5 s_4 s_3 s_1 s_7 s_6 s_5 s_4 s_2 s_3 s_8 s_7 s_6 s_5 s_4,
\\&s_3 s_4 s_2 s_5 s_4 s_3 s_1 s_6 s_5 s_4 s_2 s_3 s_8 s_7 s_6 s_5 s_4,
s_5 s_4 s_2 s_6 s_5 s_4 s_3 s_1 s_7 s_6 s_5 s_4 s_2 s_3 s_8 s_7 s_6 s_5 s_4,
\\&s_4 s_2 s_6 s_5 s_4 s_3 s_1 s_7 s_6 s_5 s_4 s_3 s_8 s_7 s_6 s_5 s_4 s_2,
s_4 s_2 s_5 s_4 s_3 s_1 s_7 s_6 s_5 s_4 s_2 s_3 s_8 s_7 s_6 s_5 s_4,
\\&s_4 s_2 s_5 s_4 s_3 s_1 s_6 s_5 s_4 s_2 s_3 s_4 s_8 s_7 s_6 s_5,
s_2 s_6 s_5 s_4 s_3 s_1 s_7 s_6 s_5 s_4 s_3 s_8 s_7 s_6 s_5 s_4 s_2,
\\&s_2 s_5 s_4 s_3 s_1 s_7 s_6 s_5 s_4 s_3 s_8 s_7 s_6 s_5 s_4 s_2,
s_2 s_5 s_4 s_3 s_1 s_6 s_5 s_4 s_2 s_3 s_8 s_7 s_6 s_5 s_4,
\\&s_2 s_4 s_3 s_1 s_7 s_6 s_5 s_4 s_2 s_3 s_8 s_7 s_6 s_5 s_4,
s_2 s_4 s_3 s_1 s_6 s_5 s_4 s_2 s_3 s_4 s_8 s_7 s_6 s_5,
\\&s_2 s_4 s_3 s_1 s_5 s_4 s_2 s_3 s_4 s_8 s_7 s_6 s_5,
s_1 s_3 s_4 s_2 s_5 s_4 s_3 s_1 s_6 s_5 s_4 s_2 s_3 s_7 s_6 s_5 s_4,
\\&s_3 s_4 s_2 s_5 s_4 s_3 s_1 s_6 s_5 s_4 s_2 s_3 s_7 s_6 s_5 s_4,
s_4 s_2 s_5 s_4 s_3 s_1 s_6 s_5 s_4 s_2 s_3 s_7 s_6 s_5 s_4,
\\&s_2 s_5 s_4 s_3 s_1 s_6 s_5 s_4 s_3 s_7 s_6 s_5 s_4 s_2,
s_2 s_4 s_3 s_1 s_6 s_5 s_4 s_2 s_3 s_7 s_6 s_5 s_4,
\\&s_2 s_4 s_3 s_1 s_5 s_4 s_2 s_3 s_4 s_7 s_6 s_5,
s_2 s_4 s_3 s_1 s_5 s_4 s_2 s_3 s_6 s_5 s_4
\}\end{align*}
}
These considerations prove the lower bounds 	\eqref{geq}.\par
In type $F_4$, the roots displayed in \eqref{radF4} index an abelian set (which is neither  of maximal dimension nor an upper ideal in the root poset). In type $C_n$, the maximal dimension of an abelian ideal is $\binom{n+1}{2}$, which is much larger than $MC(C_n)=MC(B_n)$. A conjectural conceptual explanation of this phenomenon is given in Section \ref{spec}.
\vskip20pt
\section{Noncrystallographic types}\label{nc}
We can formulate the  problem stated in Section \ref{3} for any finite reflection group. Indeed, let $W$ be a finite reflection group viewed as a finite Coxeter system with Coxeter generators $S$. Let $R=\bigcup\limits_{w\in W} wsw^{-1}$ be the set of reflections. Set
$$N(w)=\{t\in R\mid \ell(wt)<\ell(w)\}.$$
By \eqref{rinw}, the  previous formula is the natural generalization of the set of inversion to the noncrystallographic case. The notion of being inversion  complete
is changed in  the obvious way:  $Y\subset W$ is inversion  complete if $\bigcup\limits_{x\in Y}N(x)=R$.\par  Recall (see  \cite{D}, \cite[Chapter 5]{H}, \cite[4.2]{BB})  that any Coxeter group $(W,S)$ with relations $(ss')^{m_{s s'}}=1$ has a geometric representation obtained by considering a real vector space $V$ with a distinguished basis 
$\Pi=\{\a_s\mid s\in S\}$ on which $W$ operates by reflections w.r.t. the inner product $(\a_s,\a_{s'})=-\cos(\frac{\pi}{m_{s s'}})$ (which is positive definite if $W$ is finite). 
Set \begin{equation}\label{dr}\D_{Can}=W\Pi.\end{equation}
It is known that, for finite reflection groups, the following properties hold:
\begin{enumerate}
\item any $\a\in\D_{Can}$ can be written as $\a=\sum_{s\in S}a_s\a_s$ with either all $a_s\geq 0$ or all $a_s\leq 0$, but not in both ways;
correspondingly we decompose $\D_{Can}$ as $\D_{Can}=\Dp_{Can}\cup \D_{Can}^-$.
\item $s(\a_s)\in\D_{Can}^-,\, s(\Dp_{Can}\setminus\{\a_s\})\subset \Dp_{Can}.$
\item If $w\in W,\,s,s'\in S$ are such that $w(\a_s)=\a_{s'}$, then $wsw^{-1}=s'$.
\item If $\a\in\D_{Can}$ and, for some $t\in\R$, $t\a\in\D_{Can}$ then $t=\pm 1$.
\item If $\a\in\Dp_{Can}$ cannot  be written as $\a=x\beta+y\gamma,\,x>0,y>0,\beta,\gamma\in \Dp_{Can}$,  then $\a\in\Pi$.
\end{enumerate} 
These conditions ensure, in particular,  that inversions may be computed via \eqref{nw} also in this case.
\vskip10pt
We already proved in  Example \ref{G2} that $MC(I_2(m))=2$. 
In type $H_3$, if the Coxeter diagram is 
$
\begin{tikzpicture}[baseline=-3pt,double distance=2pt,>=stealth] 
\matrix [matrix of math nodes,row sep=0cm,column sep=1cm]
{|(0)|\circ&|(1)|\circ&|(2)|\circ\\
};
\node [below] at (0) {$\a_1$};
\node [below] at (1) {$\a_2$};
\node (alfa) [below] at (2) {$\a_{3}$};
\begin{scope}[every node/.style={midway,auto}]
\draw (0) to node[auto] {5} (1);
\draw (1) -- (2);
\end{scope}
\end{tikzpicture}
$ and $s_i\equiv s_{\a_i}$, the following subset is minimal complete
$$\{s_1 s_2 s_1 s_3 s_2 s_1, s_2 s_1 s_2 s_1 s_3 s_2 s_1, s_2 s_1 s_3 s_2 s_1, s_1 s_3 s_2 s_1, s_3 s_2 s_1\}.$$
The same procedure explained in Remark \ref{f4} for type $F_4$ can be implemented in this case to check that no essential set with at least  $7$ elements  exists and that all essential sets of cardinality $6$ is not the essential set of a minimal complete set.
(see  \cite{conti} for a {\sl Mathematica} code). \par
In type $H_4$  (with Coxeter diagram  $
\begin{tikzpicture}[baseline=-3pt,double distance=2pt,>=stealth] 
\matrix [matrix of math nodes,row sep=0cm,column sep=1cm]
{|(0)|\circ&|(1)|\circ&|(2)|\circ&|(3)|\circ\\
};
\node [below] at (0) {$\a_1$};
\node [below] at (1) {$\a_2$};
\node (alfa) [below] at (2) {$\a_{3}$};
\node (alfa) [below] at (3) {$\a_{4}$};
\begin{scope}[every node/.style={midway,auto}]
\draw (0) to node[auto] {5} (1);
\draw (1) -- (2);
\draw (2) -- (3);
\end{scope}
\end{tikzpicture}$
), we can provide a lower bound by exhibiting a minimal inversion complete set consisting
of 8 elements:
{\scriptsize
\begin{align}\label{yh4}&Y(H_4)=\\\notag &\{s_1s_3s_4s_3s_2s_1s_2s_1s_3s_2s_1s_2s_3s_4s_3s_2s_1s_2s_1s_3s_2s_1s_2s_3s_4s_3s_2s_1s_2s_1s_3s_2s_1s_2s_3s_4s_3s_2s_1s_2s_3,\\\notag
&s_2s_1s_2s_1s_4s_3s_2s_1s_2s_1s_3s_2s_1s_2s_3s_4s_3s_2s_1s_2s_1s_3s_2s_1s_2s_3s_4s_3s_2s_1s_2s_1s_3s_2s_1s_2s_3s_4,\\\notag
&
s_2s_1s_2s_3s_2s_1s_2s_1s_3s_4s_3,
s_2s_1s_3s_2s_1s_2s_1s_3s_2s_1s_4s_3,\\\notag
&
s_3s_2s_1s_2s_1s_4s_3s_2s_1s_2s_1s_3s_2s_1s_2s_3s_4s_3s_2s_1s_2s_1s_3s_2s_1s_2s_3s_4s_3s_2s_1s_2s_1s_3s_2s_1s_2s_3s_4,\\\notag
&
s_2s_3s_2s_1s_2s_3s_4s_3s_2s_1s_2s_1s_3s_2s_1s_2s_3s_4s_3s_2s_1s_2s_1s_3s_2s_1s_2s_3s_4s_3s_2s_1s_2s_1s_3s_2s_1s_2s_3s_4s_3s_2s_1s_2s_1s_3s_2s_1,\\\notag
&
s_2s_1s_2s_3s_2s_4s_3s_2s_1s_2s_1s_3s_2s_1s_2s_3s_4s_3s_2s_1s_2s_1s_3s_2s_1s_2s_3s_4s_3s_2s_1s_2s_1s_3s_2s_1s_2s_3s_4s_3s_2s_1s_2s_1s_3s_2s_1,\\\notag
&
s_3s_4s_3s_2s_1s_2s_1s_3s_2s_1s_2s_3s_4s_3s_2s_1s_2s_1s_3s_2s_1s_2s_3s_4s_3s_2s_1s_2s_1s_3s_2s_1s_2s_3s_4s_3s_2s_1s_2s_1s_3s_2s_1s_2\}
\end{align}}
\section{Final Comments}\label{spec}
In this section we  propose a conjectural approach to the calculation of $MC(T)$ which would explain the connection with the theory of abelian ideals of Borel subalgebras (see Section \ref{exce}).\par
Let $\D$ denote either a usual root system if  $W$ is crystallographic or $\D_{Can}$ (cf. \eqref{dr}) in the other cases.
\begin{defi}
We say  that   $A\subset \Dp$ is abelian if $\a,\beta\in A\implies \a+\beta\notin \D$.
We say that $A\subset \Dp$ is strongly abelian if $\a,\beta\in A\implies s\a+t\beta\notin \D\,\forall s,t\in\R_{>0}$.
\end{defi}
It is clear that  a strongly abelian set is abelian. The converse in general does not hold. We have however the following result.
\begin{prop}\label{sa} If $\D$ is of type $ADE$, then an abelian set is strongly abelian.
\end{prop}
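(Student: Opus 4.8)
The plan is to reduce everything to an elementary computation with inner products, exploiting the fact that in an $ADE$ root system all roots have the same length. I would normalize the form so that $(\a,\a)=2$ for every root, and first record the standard consequences of this normalization: for two roots $\a,\be$ that are \emph{not} proportional one has $(\a,\be)\in\{-1,0,1\}$, and moreover $(\a,\be)=-1$ holds precisely when $\a+\be\in\D$. Since the only roots proportional to $\a$ are $\pm\a$, two distinct positive roots are automatically non-proportional, so it is harmless — and, as the proportional case $\a=\be$ is degenerate (then $(s+t)\a$ is a root whenever $s+t=1$), also necessary — to take $\a\ne\be$ throughout.

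Then I would argue as follows. Let $\a,\be\in A$ be distinct. Since $A$ is abelian, $\a+\be\notin\D$, so by the dichotomy above $(\a,\be)\ne-1$, that is $(\a,\be)\in\{0,1\}$ and in particular $(\a,\be)\ge0$. Suppose, for contradiction, that $\g:=s\a+t\be\in\D$ for some $s,t\in\R_{>0}$. Because $\a,\be$ are linearly independent and $s,t>0$, the root $\g$ is proportional to neither $\a$ nor $\be$; hence $(\g,\a),(\g,\be)\in\{-1,0,1\}$. On the other hand,
\[
(\g,\a)=2s+t(\a,\be),\qquad (\g,\be)=s(\a,\be)+2t,
\]
and both expressions are strictly positive, as $s,t>0$ and $(\a,\be)\ge0$. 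Therefore $(\g,\a)=(\g,\be)=1$.

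Two cases remain, each closed by computing $(\g,\g)$, which must equal $2$. If $(\a,\be)=0$, the two equations give $s=t=\tfrac12$, whence $(\g,\g)=2(s^2+t^2)=1\ne2$, a contradiction. If $(\a,\be)=1$, solving $2s+t=1$ and $s+2t=1$ yields $s=t=\tfrac13$, whence $(\g,\g)=2(s^2+st+t^2)=\tfrac23\ne2$, again a contradiction. In either case no such $\g$ exists, so $A$ is strongly abelian.

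I expect the delicate points to be bookkeeping rather than conceptual: one must justify carefully that $\g$ is non-proportional to $\a$ and to $\be$ (so that the value $\pm2$ of the inner product is excluded), and invoke the correct normalization-dependent equivalence $(\a,\be)=-1\iff\a+\be\in\D$. As a conceptual cross-check I would mention the equivalent approach of passing to the rank-two subsystem $\D\cap(\R\a+\R\be)$: being simply laced it is of type $A_1\times A_1$ or $A_2$, the abelian hypothesis rules out the $120^\circ$ configuration, and in the surviving cases one reads off directly that none of the at most six roots in the plane is a combination of $\a$ and $\be$ with both coefficients positive.
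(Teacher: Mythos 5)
Your proof is correct and follows essentially the same route as the paper's: normalize all roots to square length $2$, use $(\a,\be)\ge 0$ from abelianity, force $(\g,\a)=(\g,\be)=1$, solve for the coefficients in the two cases $(\a,\be)=0,1$, and derive a contradiction from $\|\g\|^2\ne 2$. Your extra care in excluding proportionality (so that $(\g,\a)\in\{-1,0,1\}$ really applies) and your remark that the definition must implicitly take $\a\ne\be$ are sensible refinements of the same argument, not a different one.
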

\begin{proof} Let $A$ be an abelian set and consider $\a,\be\in A,\,\a\ne\be$. We may assume that all root have square length $2$.  Since we are in types
$ADE$, we have,  for any pair $\xi,\eta$ of distinct roots, that   $(\xi,\eta)\in\{-1,0,1\}$. Assume by contradiction that 
there exist positive scalars $x,y$ such that $\gamma=x\a+y\be\in\D$. Since $\a+\be\not\in \D$, we have that $(\a,\be)\ge 0$. If $(\a,\be)=0$ then $(\gamma,\a)=2x,(\gamma,\be)=2y$. The latter relations
force $x=y=\tfrac{1}{2}$. Then $||\gamma||^2=1$, which is absurd. If  $(\a,\be)=1$ then $(\gamma,\a)=2x+y,(\gamma,\be)=x+2y$ and in turn 
$x=y=\tfrac{1}{3}$. Then $||\gamma||^2=\tfrac{2}{3}$, which is again absurd.
\end{proof}  
 \vskip10pt
 \noindent{\bf Conjecture.\ }Ê There exists a minimal complete set $Y$ of maximal cardinality and a choice of an  essential set  for $Y$ which is  strongly abelian.
  \vskip5pt
 The conjecture is true in types $A_n$ and $D_n$, as  we have proven in the previous sections. It is also true in type $B_n$: the set $P(B_n)$ (cf. \eqref{B})  is indeed strongly abelian. It is easy to check directly the conjecture for $I_2(m)$. An essential set for the minimal complete set exhibited in Section \ref{nc} for $H_3$ is indeed strongly abelian. We also checked this is also a strongly abelian set of maximal cardinality (see \cite{conti}). It is also  easy to check directly that the maximal cardinality of a strongly abelian set of roots in type $F_4$ is $6$ (see \cite{conti}), which supports our belief that inequalities 
 in \eqref{geq} are indeed equalities: were the conjecture true, the theory of abelian
 subalgebras in semisimple Lie algebra (combined with Proposition \ref{sa}) would  yield a conceptual proof of  equality for types $E_n$ in \eqref{geq}.\par
 In type $H_4$, the  following statements have been checked in \cite{conti}:
 \begin{itemize}
 \item the maximal cardinality of  a strongly abelian set is  $10$;
 \item  no strongly abelian set of cardinality $10$ can be essential;
 \item all strongly abelian sets consisting of $8$ elements cannot be embedded in larger sets (even  not strongly abelian) that are the essential sets of minimal complete sets. 
 \end{itemize}
 We remark that  the unique essential set corresponding to \eqref{yh4}Ê is indeed strongly abelian.
 In particular, if the above conjecture is true, we have $MC(H_4)=8$.
 \par
\vskip5pt
\section{Acknowledgements}
We thank Nathan Reading for interesting comments on the paper.
Immediately after the appearance of this work in the ArXiv, Nathan Reading pointed out a numerical coincidence between our statistic $MC$ and the order dimension of   $W$ regarded as poset w.r.t. Bruhat order in types $A_n,B_n,I_2(n)$ (see \cite{R}). There can't be coincidence in type $F_4$: although the precise value for the order dimension is not known, the value of  $MC$ given in Remark \ref{f4} do not belong to the interval  given in \cite{R}  for the order  dimension.\par
Reading also noticed that   in types $A_n,B_n,I_2(n),H_3,H_4$ the Bruhat poset is dissective (and this property indeed allowed him to calculate the order dimension). This observation led us to compute $MC(H_3)$, which turns out be one less than the order dimension. 
\par
We also thank 
 Dmitri Panyushev for pointing out an inaccuracy in a previous version and the referee for his/her careful reading of the paper,  in particular for urging us to deal with the $H_4$ case.

\vskip5pt
\footnotesize{
\vskip5pt
{\sl Claudia Malvenuto, Luigi Orsina, Paolo Papi}: Dipartimento di Matematica, Sapienza Universit\`a di Roma, P.le A. Moro 2,
00185, Roma, Italy; 
\par\noindent
\par\noindent{\tt claudia@mat.uniroma1.it, orsina@mat.uniroma1.it,  papi@mat.uniroma1.it}
\vskip5pt
\noindent{\sl Pierluigi M\"oseneder Frajria}: Politecnico di Milano, Polo regionale di Como, 
Via Valleggio 11, 22100 Como,
ITALY;\\ {\tt pierluigi.moseneder@polimi.it}
}

\end{document}